\def\figurename{Figure} 
\renewcommand{\fnum@figure}[1]{\figurename~\thefigure.}
\def\tablename{Table} 
\renewcommand{\fnum@table}[1]{\tablename~\thetable.}
\newtheorem{theorem}{Theorem}[section]
\newtheorem{lemma}[theorem]{Lemma}
\newtheorem{corollary}[theorem]{Corollary}
\newtheorem{proposition}[theorem]{Proposition}
\theoremstyle{example}
\newtheorem{example}[theorem]{Example}
\theoremstyle{definition}
\newtheorem{definition}[theorem]{Definition}
\theoremstyle{remark}
\newtheorem{remark}[theorem]{Remark}
\numberwithin{equation}{section}
\def\cal{\mathcal}
\title{Hom-left-symmetric color dialgebras, Hom-tridendriform color algebras and  Yau's twisting generalizations}
\author[1]{Ibrahima Bakayoko}
\author[2]{Sergei Silvestrov}
\affil[1]{D\'epartement de Math\'ematiques, Universit\'e de N'Z\'er\'ekor\'e, BP 50 N'Z\'er\'ekor\'e, Guin\'ee, \text{ibrahimabakayoko27@gmail.com}}
\affil[2]{Division of Applied Mathematics, School of Education, Culture and Communication, M\"{a}lardalen University, Box 883, 72123 V{\"a}ster{\aa}s, Sweden,
\text{sergei.silvestrov@mdh.se}}
\date{}
\begin{document}
\maketitle


\begin{abstract}
The goal of this paper is to introduce and give some constructions and study properties of Hom-left-symmetric color dialgebras and Hom-tridendriform
 color algebras. Next, we study their connection with Hom-associative color algebra, Hom-post-Lie color algebra and Hom-Poisson color dialgebras.
Finally, we generalize Yau's twisting to a class of color Hom-algebras and used endomorphisms
or elements of centroids to produce other color Hom-algebras from given one. \\ \\
\noindent
{\bf Keywords:} Hom-tridendriform color algebra, Hom-associative color algebra,  Hom-left-symmetric color (di)algebras, Hom-Poisson color dialgebras, Yau's twisting. \\
{\bf Mathematics Subject Classification (2010):} 17A30, 17A32
\end{abstract}



\section{Introduction}

The investigations of various $q$-deformations (quantum deformations) of Lie algebras began a period of rapid expansion in 1980's stimulated by introduction of quantum groups motivated by applications to the quantum Yang-Baxter equation, quantum inverse scattering methods and constructions of the quantum deformations of universal enveloping algebras of semi-simple Lie algebras. In
\cite{AizawaSaito,ChaiElinPop,ChaiIsLukPopPresn,ChaiKuLuk,ChaiPopPres,CurtrZachos1,DamKu,DaskaloyannisGendefVir,Hu,Kassel92,LiuKQuantumCentExt,LiuKQCharQuantWittAlg,LiuKQPhDthesis}
various versions of $q$-deformed Lie algebras appeared in physical contexts such as string theory, vertex models in conformal field theory, quantum mechanics and quantum field theory in the context of $q$-deformations of infinite-dimensional algebras, primarily the $q$-deformed Heisenberg algebras, $q$-deformed oscillator algebras and $q$-deformed Witt and $q$-deformed Virasoro algebras, and some interesting $q$-deformations of the Jacobi identity for Lie algebras in these $q$-deformed algebras were observed.

Hom-Lie algebras and more general quasi-Hom-Lie algebras were introduced first by Larsson, Hartwig and Silvestrov \cite{HLS}, where
the general quasi-deformations and discretizations of Lie algebras of vector fields using more general $\sigma$-derivations (twisted derivations) and a general method for construction of deformations of Witt and Virasoro type algebras based on twisted derivations have been developed,
initially motivated by the $q$-deformed Jacobi identities observed for the $q$-deformed algebras in physics, along with $q$-deformed versions of homological algebra and discrete modifications of differential calculi. The general abstract quasi-Lie algebras and the subclasses of quasi-Hom-Lie algebras and Hom-Lie algebras as well as their color algebras generalizations have been introduced \cite{HLS,LS1,LS2,LSGradedquasiLiealg,LS3,Czech:witt}.
Subsequently, various classes of Hom-Lie admissible algebras have been considered in \cite{ms:homstructure}. In particular, in \cite{ms:homstructure}, the Hom-associative algebras have been introduced and shown to be Hom-Lie admissible, that is leading to Hom-Lie algebras using commutator map as new product, and in this sense constituting a natural generalization of associative algebras, as Lie admissible algebras leading to Lie algebras via commutator map as new product.
In \cite{ms:homstructure}, moreover several other interesting classes of Hom-Lie admissible algebras generalising some classes of non-associative algebras, as well as examples of finite-dimensional Hom-Lie algebras have been described. Since these pioneering works \cite{HLS,LS1,LS2,LSGradedquasiLiealg,LS3,ms:homstructure}, Hom-algebra structures have developed in a popular broad area with increasing number of publications in various directions.
In Hom-algebra structures, defining algebra identities are twisted by linear maps. Hom-algebras structures are very useful since Hom-algebra structures of a given type include their classical counterparts and open more possibilities for deformations, extensions of homology and cohomology structures and representations, Hom-coalgebra, Hom-bialgebras and Hom-Hopf algebras (see for example
\cite{AbAmMakh:CohomHomcoloralg,AmEjMakh:homdeformation,AmMakh:HomLieadmsuperalg,AmMakhSaad:cohomhomliesuperalgqdefwittalg,ams:ternary,ams:naryhomnambulie,AtMaSi:GenNambuAlg,
BI:LaplacehomLiequasibialg,BI:LmodcomodhomLiequasibialg,Hombiliform,HounkonnouDasundo:homcentersymalgbialgPropConseq2016,HounkonnouDasundo:homcentersymalgbialg2018,
ElhamdadiMakhlouf:DeformHomAltHomMalcev,kms:nhominduced,LarssonSigSilvJGLTA2008,LS1,MakSil:HomLieAdmissibleHomCoalgHomHopf,RichardSilvestrovJA2008,shenghomrep,SigSilvGLTbdSpringer2009,YauHomEnv,YauGenCom,YauHomNambuLie} and references therein).

Dendriform algebras were introduced by Loday as algebras with two operations which dichotomize the notion of associative algebras \cite{JLodayDialgebras,JLoday1DialgebrasPrepr}.
They are connected to  K-theory, Hopf algebras, homotopy
Gerstenhaber algebra, operads, homology, combinatorics and quantum field theory where they occur in
the theory of renormalization of Connes and Kreimer. While tridendriform algebra were
introduced later by Loday and Ronco in their study of polytopes and Koszul duality\cite{JL2}.

Hom-tridendriform algebras were introduced in \cite{MakhloufHomdemdoformRotaBaxterHomalg2011} as a twisted generalization of tridendriform algebras. More precisely,
a Hom-tridendriform  algebra is a linear space $A$, together with
three bilinear maps  $\dashv, \vdash, \cdot : A\otimes A\rightarrow A$ and  a linear map $\alpha : A\rightarrow A$ satisfing, for $x, y, z\in A$,
\begin{eqnarray}
(x\dashv y)\dashv\alpha(z)&=&\alpha(x)\dashv(y\dashv z+y\vdash z+y\cdot z),\quad
\alpha(x)\vdash(y\vdash z)=(x\dashv y+x\vdash y+x\cdot y)\vdash\alpha(z),\nonumber\\
(x\dashv y)\cdot\alpha(z)&=&\alpha(x)\cdot(y\vdash z),\quad (x\vdash y)\cdot\alpha(z)=\alpha(x)\vdash(y\cdot z),\quad
(x\cdot y)\dashv\alpha(z)=\alpha(x)\cdot(y\dashv z),\nonumber\\
 (x\cdot y)\cdot\alpha(z)&=&\alpha(x)\cdot(y\cdot z),\quad (x\vdash y)\dashv\alpha(z)=\alpha(x)\vdash(y\dashv z)\nonumber.
\end{eqnarray}
When $\alpha=id$, we recover tridendriform algebras. The author provides constructions of these algebras and their relationships with Hom-preLie
algebras.

Connections between Hom-bialgebras and Hom-coalgebras, Hom-dendiform and tridendiform systems and Rota-Baxter and Hom-Rota-Baxter Hom-algebra structures are considered in \cite{MaZheng:RotaBaxtMonoidalHomAlg,MaMakSilv:RotaBaxterCosystCoquasitriangmixedbialg,MaMakSilv:RotaBaxterbisystemscovbialg,MaMakSilv:CurvedOoperatorsyst,MakhloufHomdemdoformRotaBaxterHomalg2011,MakYau:RotaBaxterHomLieadmis}.

As for Hom-Post-Lie algebras, they has been studied in \cite{BI} as a twisted generalization post-Lie algebra.
Post-Lie algebras first arise form the work of Bruno Vallette \cite{BV:Homologygenpartitposets} in 2007 through the purely operadic technique of Koszul dualization.
In \cite{ML:postLiealg}, it shown that they also arise naturally from differential geometry of homogeneous spaces and Klein geometries, topics that are
closely related to Cartan's method of moving frames. The universal
enveloping algebras of post-Lie algebras and the free post-Lie algebra are studied. Some examples and related structures are given.
Further, they were extended to graded case in \cite{IT}.

It have been noticed by D. Yau that one can deform associative and Lie algebra structures into Hom-associative and Hom-Lie algebras by composition of associative algebra multiplication with algebra endomorphism. This procedure, known as Yau's twisting or a composition method has been intensively used for many other Hom-algebraic structures such as
Hom-Akivis algebras \cite{Issa:Hom-Akivis algebras}, Hom-Lie Yamagutti algebras \cite{DN}, Hom-Poisson algebras \cite{DYa}, Hom-Post-lie algebra \cite{BI},
Hom-pre-Lie (or left Hom-symmetric)\cite{ms:homstructure,MakhloufHomdemdoformRotaBaxterHomalg2011}, G-Hom-associative algebra \cite{ms:homstructure,YauHomHom}, Hom-dendriform algebra \cite{MakhloufHomdemdoformRotaBaxterHomalg2011} \cite{YauHomHom}, module over Hom-bialgebras \cite{DYu} and so on.
Here we prove that this technique works for a big class of Hom-algebraic structures and for other special linear maps as averaging operator
or element of centroid. In the litterature, the graded version, including super and color case, of many of these structures were studied.

The paper is organized as follows.
In Section \ref{sec:homleftsymcolordialg}, we show on the one hand that the tensor product of two Hom-associative color algebra is also a Hom-associative color algebra.
On the other hand, that the tensor product of a commutative Hom-associative color algebra and a Hom-left-symmetric color algebra is a
Hom-left-symmetric color algebra. We give a construction of a Hom-associative color dialgebra from an associative color algebra and an averaging
operator. Next, we introduce Hom-left-symmetric color dialgebras and study their connection with Hom-Poisson color dialgebras.
In Section \ref{sec:homtridendriformcoloralgebras}, we show that Hom-Post-Lie and Hom-associative color algebras may come from Hom-tridendriform color algebras. Next, we pointed out
that the opposite of any Hom-tridendriform color algebra is also one, and any Hom-tridendriform color algebra carries a structure of Hom-dendriform
algebra. Section \ref{sec:Yaustwistinggeneralization} is devoted to a Yau's twisting generalization to Hom-algebraic structures with a finite number of bilinear products, and use averaging
operators or centroids to produce other  color Hom-algebras from given one.

Throughout this paper, all graded vector spaces are assumed to be over a field $\mathbb{K}$ of characteristic different from 2.

\section{Hom-left-symmetric color dialgebras}
\label{sec:homleftsymcolordialg}

Giving an abelian group $G$, a map $\varepsilon :G\times G\rightarrow {\bf \mathbb{K}^*}$ is called a skew-symmetric bicharacter or a commutation factor on $G$ if the following
identities hold for all $a, b, c\in G$:
\begin{itemize}
\item[i)] $\varepsilon(a, b)\varepsilon(b, a)=1$;
\item[ii)] $\varepsilon(a, b+c)=\varepsilon(a, b)\varepsilon(a, c)$;
\item[iii)] $\varepsilon(a+b, c)=\varepsilon(a, c)\varepsilon(b, c).$
\end{itemize}
Remark that
$\varepsilon(a, 0)=\varepsilon(0, a)=1, \varepsilon(a,a)=\pm 1$
for all $a\in G,$ where $0$ is the identity of $G$.
If $x$ and $y$ are two homogeneous elements of degree $a$ and $b$ respectively and $\varepsilon$ is a skew-symmetric bicharacter,
then we shorten the notation by writing $\varepsilon(x, y)$ instead of $\varepsilon(a, b)$.
\begin{definition}[\cite{LY}]
A Hom-associative color algebra is a triple $(A, \mu, \alpha) $ consisting of a $G$-graded linear space $A$, an even bilinear map $\mu : A\times A \rightarrow A $ and an
even homomorphism $\alpha: A \rightarrow A$ satisfying
\begin{eqnarray}
 \mu(\alpha(x), \mu(y, z))&=&\mu(\mu(x, y), \alpha(z)), \label{aca}
\end{eqnarray}
for any $x, y, z \in \mathcal{H}(A) $ (homogeneouse elements of $A$).

If in addition $\mu=\varepsilon(\cdot, \cdot)\mu^{op}$ i.e. $\mu(x, y)=\varepsilon(x, y)\mu(y, x)$, for any $x, y\in\mathcal{H}(A)$, the color Hom-associative algebra
$(A, \mu, \varepsilon, \alpha)$ is said to be a commutative Hom-associative color algebra.
\end{definition}

The below theorem is proved by a direct calculation.
\begin{theorem}\label{bk8}
 Let $(A_1, \cdot_1, \varepsilon, \alpha_1)$ and $(A_2, \cdot_2, \varepsilon, \alpha_2)$ be two Hom-associative color algebras.
Then $A=A_1\otimes A_2$ is endowed with a color Hom-associative algebra structure for twising map
$\alpha:=\alpha_1\otimes\alpha_2 : A\rightarrow A$ and the product $\ast : A\otimes A\rightarrow A$ defined by
\begin{eqnarray}
 \alpha(a_1\otimes a_2)&:=&\alpha_1(a_1)\otimes\alpha_2(a_2),\nonumber\\
(a_1\otimes a_2)\ast(b_1\otimes b_2)&:=&\varepsilon(a_2, b_1)(a_1\cdot_1 b_1)\otimes (a_2\cdot_2 b_2).\nonumber
\end{eqnarray}
\end{theorem}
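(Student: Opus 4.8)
The plan is to verify the color Hom-associativity identity \eqref{aca} for the triple $(A, \ast, \alpha)$ directly on homogeneous elements, reducing it to the same identity in each factor together with the bicharacter axioms. First I would fix homogeneous $x = a_1\otimes a_2$, $y = b_1\otimes b_2$, $z = c_1\otimes c_2$ and record the key preliminary observation: since $\alpha_1$ and $\alpha_2$ are even, the twisting map $\alpha = \alpha_1\otimes\alpha_2$ preserves the $G$-grading, so $\alpha_i(u)$ has the same degree as $u$. This is precisely what keeps the commutation factors under control after $\alpha$ has been applied. I would also note that the degree of a composite such as $b_1\cdot_1 c_1$ is $\deg b_1 + \deg c_1$, since $\cdot_1$ is even.

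Next I would expand both sides of \eqref{aca}. On the left, the inner product $\ast(y,z)$ contributes $\varepsilon(b_2, c_1)$, and then pairing $\alpha(x)$ against the result contributes $\varepsilon(a_2, b_1 + c_1)$ (the second slot $\alpha_2(a_2)$ of $\alpha(x)$ crossing the first slot $b_1\cdot_1 c_1$ of $\ast(y,z)$), so that
$$\ast(\alpha(x),\ast(y,z)) = \varepsilon(b_2, c_1)\,\varepsilon(a_2, b_1 + c_1)\,\bigl(\alpha_1(a_1)\cdot_1 (b_1\cdot_1 c_1)\bigr)\otimes\bigl(\alpha_2(a_2)\cdot_2 (b_2\cdot_2 c_2)\bigr).$$
Symmetrically, on the right the factor $\varepsilon(a_2, b_1)$ comes from $\ast(x,y)$ and $\varepsilon(a_2 + b_2, c_1)$ from pairing the result against $\alpha(z)$, giving
$$\ast(\ast(x,y),\alpha(z)) = \varepsilon(a_2, b_1)\,\varepsilon(a_2 + b_2, c_1)\,\bigl((a_1\cdot_1 b_1)\cdot_1\alpha_1(c_1)\bigr)\otimes\bigl((a_2\cdot_2 b_2)\cdot_2\alpha_2(c_2)\bigr).$$

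The proof then reduces to two independent checks. The scalar prefactors coincide: by biadditivity, axioms (ii) and (iii), both $\varepsilon(a_2, b_1 + c_1)\varepsilon(b_2, c_1)$ and $\varepsilon(a_2, b_1)\varepsilon(a_2 + b_2, c_1)$ expand to $\varepsilon(a_2, b_1)\varepsilon(a_2, c_1)\varepsilon(b_2, c_1)$. The tensor factors coincide by applying the color Hom-associativity \eqref{aca} of $(A_1,\cdot_1,\varepsilon,\alpha_1)$ in the first slot and of $(A_2,\cdot_2,\varepsilon,\alpha_2)$ in the second. Matching scalars and matching tensors yield \eqref{aca} for $\ast$, and by bilinearity the identity extends from homogeneous elements to all of $A$.

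The only place requiring care, and the sole potential obstacle, is the bookkeeping of the commutation factors: one must apply $\varepsilon$ to the \emph{total} degree of each composite factor and use that $\alpha$ leaves degrees unchanged. Once this is handled correctly, the biadditivity of the bicharacter forces the two prefactors to agree, and the statement follows with no further computation.
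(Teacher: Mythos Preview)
Your proof is correct and is precisely the direct calculation the paper alludes to but omits (the paper simply states that the theorem ``is proved by a direct calculation'' without writing out any details). Your bookkeeping of the bicharacter factors and the use of Hom-associativity in each tensor slot are exactly what is needed.
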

\begin{corollary}
 The tensor product of two associative color algebras is also an associative color algebra.
\end{corollary}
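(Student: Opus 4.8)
The plan is to derive this as a direct specialization of Theorem \ref{bk8}. The key observation is that an associative color algebra is nothing but a Hom-associative color algebra whose twisting map is the identity: if $(A_1, \cdot_1, \varepsilon)$ and $(A_2, \cdot_2, \varepsilon)$ are associative color algebras, then setting $\alpha_1 = id_{A_1}$ and $\alpha_2 = id_{A_2}$ turns each of them into a Hom-associative color algebra, since identity \eqref{aca} with $\alpha = id$ reads $\mu(x, \mu(y,z)) = \mu(\mu(x,y), z)$, which is precisely ordinary associativity.

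First I would invoke Theorem \ref{bk8} with these identity twisting maps, obtaining that $A = A_1 \otimes A_2$ carries a Hom-associative color algebra structure with the product $\ast$ defined there and twisting map $\alpha = \alpha_1 \otimes \alpha_2$. Next I would observe that $\alpha = id_{A_1} \otimes id_{A_2} = id_A$, so the twisting map on the tensor product is again the identity. Finally, reading \eqref{aca} back for the triple $(A, \ast, id_A)$ yields the plain associativity $(a \ast b) \ast c = a \ast (b \ast c)$, so $(A, \ast, \varepsilon)$ is an associative color algebra.

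There is no substantial obstacle here: the whole argument is the collapse of the Hom-identity to the classical one under $\alpha = id$, together with the multiplicativity $id \otimes id = id$ of the tensor product of identity maps. The only point requiring a moment of care is to confirm that the sign-twisted product $\ast$ inherited from Theorem \ref{bk8} coincides with the standard product on the tensor product of two associative color algebras, so that the resulting object is the expected associative color algebra and not merely some associative structure.
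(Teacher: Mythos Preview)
Your proposal is correct and matches the paper's approach: the corollary is stated immediately after Theorem~\ref{bk8} with no separate proof, so it is meant to follow by the exact specialization $\alpha_1 = id_{A_1}$, $\alpha_2 = id_{A_2}$ that you describe. Your additional remarks about $id_{A_1}\otimes id_{A_2} = id_A$ and the recovery of ordinary associativity from \eqref{aca} simply make explicit what the paper leaves implicit.
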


\begin{definition}
A Hom-color algebra or a color Hom-algebra $(S, \cdot, \varepsilon, \alpha)$ is an $G$-graded linear space $S$ equipped with even bilinear multiplication $\cdot$, even twising map $\alpha$
and commutation factor $\varepsilon$. A Hom-color algebra is called a Hom-left-symmetric color algebra if the following Hom-left-symmetric color identity
(or {\it $\varepsilon$-Hom-left-symmetric identity})
\begin{eqnarray}
(x\cdot y)\cdot\alpha(z)-\alpha(x)\cdot(y\cdot z)=\varepsilon(x, y)\Big((y\cdot x)\cdot\alpha(z)-\alpha(y)\cdot(x\cdot z)\Big)\label{clsa}
\end{eqnarray}
is satisfied for all $x, y, z\in \mathcal{H}(S)$.
 \end{definition}
\begin{example}
 Let $A=A_{(-1)}\oplus A_{(1)}=<e_2, e_3>\oplus <e_1>$ be a $G=\{-1, +1\}$-graded linear space. Then $A$ is a Hom-left-symmetric color algebra
together with the bicharacter
$$\varepsilon(i, j)=(-1)^{(i-1)(j-1)/4},$$
the multiplication
$$e_1e_1=-e_1,\quad e_2e_1=-ae_2,\quad e_3e_1=e_3, \quad e_1e_2=-ae_2, $$
 and the even linear map $\alpha : A\rightarrow A$ defined by
$$\alpha(e_1)=e_1,\quad \alpha(e_2)=ae_2,\quad \alpha(e_3)=-e_3, \quad a\in\mathbb{R}.$$
\end{example}

The proof of the below Theorem is proved by a straighforward computation.
\begin{theorem}
 Let $(S, \ast, \varepsilon, \alpha_S)$ be a Hom-left-symmetric color algebra and $(A, \cdot, \varepsilon, \alpha_A)$ a commutative
Hom-associative color algebra. \\
Then  $(S\otimes A, \circ, \varepsilon, \alpha_{S\otimes A})$ is a Hom-left-symmetric color algebra, with
\begin{eqnarray}
\alpha_{S\otimes A}&=&\alpha_S\otimes\alpha_A,\nonumber\\
 (x\otimes a)\circ(y\otimes b)&=&\varepsilon(a, y)(x\ast y)\otimes(a\cdot b),\nonumber
\end{eqnarray}
for all $x, y\in \mathcal{H}(S), a, b\in \mathcal{H}(A)$.
\end{theorem}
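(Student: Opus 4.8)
The plan is to verify the $\varepsilon$-Hom-left-symmetric identity \eqref{clsa} for the triple $(S\otimes A, \circ, \varepsilon, \alpha_{S\otimes A})$ by direct expansion on homogeneous generators. First I would fix homogeneous elements $X=x\otimes a$, $Y=y\otimes b$, $Z=z\otimes c$, recall that the degree of $x\otimes a$ in $S\otimes A$ is the sum of the degrees of $x$ and $a$, and record that by the bicharacter axioms (ii)--(iii),
\[
\varepsilon(X,Y)=\varepsilon(x,y)\varepsilon(x,b)\varepsilon(a,y)\varepsilon(a,b),
\]
with analogous factorizations for the other pairs. Since $\alpha_S$ and $\alpha_A$ are even, they preserve degrees, which I will use to simplify every commutation factor involving $\alpha_S(z)$, $\alpha_A(a)$, and so on.

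Next I would expand the two terms on the left-hand side of \eqref{clsa}. Applying the definition of $\circ$ twice and splitting the bicharacter on the product degrees via axiom (iii) gives
\begin{eqnarray}
(X\circ Y)\circ\alpha_{S\otimes A}(Z)&=&\varepsilon(a,y)\varepsilon(a,z)\varepsilon(b,z)\,\big[(x\ast y)\ast\alpha_S(z)\big]\otimes\big[(a\cdot b)\cdot\alpha_A(c)\big],\nonumber\\
\alpha_{S\otimes A}(X)\circ(Y\circ Z)&=&\varepsilon(a,y)\varepsilon(a,z)\varepsilon(b,z)\,\big[\alpha_S(x)\ast(y\ast z)\big]\otimes\big[\alpha_A(a)\cdot(b\cdot c)\big].\nonumber
\end{eqnarray}
The crucial observation is that, because $(A,\cdot,\varepsilon,\alpha_A)$ is Hom-associative, $(a\cdot b)\cdot\alpha_A(c)=\alpha_A(a)\cdot(b\cdot c)=:W$, so the two $A$-slots coincide and the whole left-hand side factors as $\varepsilon(a,y)\varepsilon(a,z)\varepsilon(b,z)\,\big[(x\ast y)\ast\alpha_S(z)-\alpha_S(x)\ast(y\ast z)\big]\otimes W$.

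Then I would expand the right-hand side in the same way. The parallel computation produces the common prefactor $\varepsilon(b,x)\varepsilon(b,z)\varepsilon(a,z)$ and the $A$-slot $(b\cdot a)\cdot\alpha_A(c)=\alpha_A(b)\cdot(a\cdot c)=:W'$; here commutativity of $A$ enters, since $b\cdot a=\varepsilon(b,a)\,a\cdot b$ forces $W'=\varepsilon(b,a)W$. Collecting scalars and multiplying by $\varepsilon(X,Y)$ produces the prefactor $\varepsilon(x,y)\varepsilon(x,b)\varepsilon(a,y)\varepsilon(a,b)\varepsilon(b,x)\varepsilon(b,z)\varepsilon(a,z)\varepsilon(b,a)$ on the term $\big[(y\ast x)\ast\alpha_S(z)-\alpha_S(y)\ast(x\ast z)\big]\otimes W$. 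Using axiom (i) in the form $\varepsilon(x,b)\varepsilon(b,x)=1$ and $\varepsilon(a,b)\varepsilon(b,a)=1$, this prefactor collapses to $\varepsilon(x,y)\varepsilon(a,y)\varepsilon(a,z)\varepsilon(b,z)$.

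Finally, comparing the two sides, the common scalar $\varepsilon(a,y)\varepsilon(a,z)\varepsilon(b,z)$ and the common tensor factor $W$ cancel, and the required identity reduces exactly to
\[
(x\ast y)\ast\alpha_S(z)-\alpha_S(x)\ast(y\ast z)=\varepsilon(x,y)\big[(y\ast x)\ast\alpha_S(z)-\alpha_S(y)\ast(x\ast z)\big],
\]
which is the $\varepsilon$-Hom-left-symmetric identity \eqref{clsa} for $(S,\ast,\varepsilon,\alpha_S)$, true by hypothesis. I expect the main obstacle to be purely bookkeeping: keeping the commutation factors consistent under the convention that $\deg(x\otimes a)=\deg x+\deg a$, and checking that every $\varepsilon$ arising from reordering cancels correctly. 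The conceptual content is light — Hom-associativity of $A$ makes the two $A$-slots agree so that the $S$-slot factors out, while commutativity of $A$ supplies precisely the single factor $\varepsilon(b,a)$ that combines with $\varepsilon(X,Y)$ to match the left-hand side.
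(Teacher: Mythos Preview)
Your proof is correct and follows exactly the approach the paper has in mind: the paper states only that the result ``is proved by a straightforward computation,'' and your expansion of both sides of \eqref{clsa} on homogeneous tensors, followed by the bicharacter cancellations and the use of Hom-associativity and $\varepsilon$-commutativity of $A$, is precisely that computation carried out in full. There are no gaps.
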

Recall that an ideal of a color Hom-algebra $(A, \cdot, \varepsilon, \alpha)$ is a graded subspace  $I$ such that $\alpha(I)\subset I$ and
$I\cdot A=A\cdot I\subset I$.
\begin{proposition}
Let $(S, \cdot, \varepsilon, \alpha)$ be a Hom-left-symmetric color algebra and $I$ an ideal of $S$ such that
 for any $i, j\in\mathcal{H}(I)$ and $a, b, c, d\in\mathcal{H}(S)$,
\begin{eqnarray}
\alpha(i)\cdot(a\cdot b)-(i\cdot a)\cdot\alpha(b)&=&\varepsilon(i, a)\Big(\alpha(a)\cdot(i\cdot b)-(a\cdot i)\cdot\alpha(b)\Big),\nonumber\\
 \alpha(c)\cdot(d\cdot j)-(c\cdot d)\cdot\alpha(j)&=&\varepsilon(c, d)\Big(\alpha(d)\cdot(c\cdot j)-(d\cdot c)\cdot\alpha(j)\Big).\nonumber
\end{eqnarray}
Then $(I\oplus S, \dashv, \vdash, \alpha_{I\oplus S})$ is a Hom-left-symmetric color dialgebra with
\begin{eqnarray}
 \alpha_{I\oplus S}&=&\alpha_I\oplus\alpha_S,\nonumber\\
(i_1+a_1)\dashv(i_2+a_2)&=&i_1a_2+a_1a_2, \nonumber\\
(i_1+a_1)\vdash(i_2+a_2)&=&a_1i_2+a_1a_2, \nonumber
\end{eqnarray}
for $i_1, a_1\in S_p, i_1, a_2\in S_q$.
\end{proposition}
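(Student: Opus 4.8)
The plan is to verify the defining identities of a Hom-left-symmetric color dialgebra directly on homogeneous elements $u=i_u+a_u$, $v=i_v+a_v$, $w=i_w+a_w$ of $I\oplus S$, exploiting that both products are diagonal for the decomposition $I\oplus S$. From
\begin{eqnarray}
u\dashv v &=& i_ua_v+a_ua_v,\nonumber\\
u\vdash v &=& a_ui_v+a_ua_v,\nonumber
\end{eqnarray}
one reads off that the $S$-component of each product is always the $S$-product $a_ua_v$, while the $I$-component is $i_ua_v$ for $\dashv$ and $a_ui_v$ for $\vdash$; both lie in $I$ because $I$ is an ideal. Since $\alpha_{I\oplus S}=\alpha_I\oplus\alpha_S$ also respects the decomposition (using $\alpha(I)\subset I$), every term occurring in an axiom splits into an $I$-part and an $S$-part, so each axiom decouples into two independent equations, one valued in $S$ and one valued in $I$, which I would treat in turn.

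For the $S$-valued equation, both $\dashv$ and $\vdash$ restrict on the $S$-summand to $\cdot$ and $\alpha_{I\oplus S}$ restricts to $\alpha_S$; hence the $S$-part of every axiom is literally the $\varepsilon$-Hom-left-symmetric identity \eqref{clsa} for $(S,\cdot,\varepsilon,\alpha)$ and holds by hypothesis. For the $I$-valued equation, I would track the single ideal element that survives, whose argument slot is fixed by the pattern of $\dashv$ and $\vdash$ ($\dashv$ retains the ideal element coming from its left factor, $\vdash$ from its right factor). Writing $\mathrm{as}(x,y,z)=(x\cdot y)\cdot\alpha(z)-\alpha(x)\cdot(y\cdot z)$, the $I$-part of each axiom becomes an equality of two such expressions in which exactly one argument lies in $I$. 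When that ideal argument is one of the two interchanged arguments, the equality is exactly the first displayed condition; when it is the fixed (third) argument, it is exactly the second. This is precisely why those two conditions are imposed.

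The main obstacle will be the bookkeeping of the commutation factors $\varepsilon$: one must check that the factor produced on the $I$-side agrees with the one required by the two hypotheses. Here the grading convention $i_u,a_u\in S_p$ (and $i_v,a_v\in S_q$, etc.) is decisive, since it forces each element $u$ to be homogeneous of a single degree, so its ideal part $i_u$ and its $S$-part $a_u$ share that degree and $\varepsilon(i_u,\,\cdot\,)=\varepsilon(a_u,\,\cdot\,)$. This lets one transport the factor computed from the $S$-component verbatim onto the $I$-component, so the two sides match term by term. Once this matching is confirmed for each axiom, together with the multiplicativity $\alpha_{I\oplus S}(u\dashv v)=\alpha_{I\oplus S}(u)\dashv\alpha_{I\oplus S}(v)$ and its $\vdash$-analogue---immediate from $\alpha(I)\subset I$ and multiplicativity of $\alpha_S$---the verification is complete.
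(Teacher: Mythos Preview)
Your approach is correct and is exactly the direct verification the paper has in mind; the paper's own proof is the single line ``It is straightforward by calculation,'' so your sketch is simply a fleshed-out version of that. Two minor remarks: for axioms \eqref{als1} and \eqref{als2} both sides are literally identical (since $v\dashv w$ and $v\vdash w$ share the same $S$-component $a_va_w$, and the outer $\dashv$, respectively $\vdash$, ignores the $I$-component of its inner argument), so nothing beyond this observation is needed there; and multiplicativity of $\alpha_{I\oplus S}$ is not part of Definition~\ref{gls}, so that final check is superfluous.
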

\begin{proof}
It is straighforward by calculation.
\end{proof}
\begin{definition}\label{gls}
A Hom-left-symmetric color dialgebra is a $G$-graded linear space $S$ equipped with a bicharacter $\varepsilon : G\otimes G\rightarrow \mathbb{K}^*$
 on $G$ and two even bilinear products $\dashv : S\times S\rightarrow S$ and $\vdash : S\times S\rightarrow S$ satisfying the identities
\begin{eqnarray}
 \alpha(x)\dashv(y\dashv z)&=&\alpha(x)\dashv(y\vdash z),\label{als1}\\
(x\vdash y)\vdash \alpha(z)&=&(x\dashv y)\vdash\alpha(z),\label{als2}\\
\alpha(x)\dashv(y\dashv z)-(x\dashv y)\dashv \alpha(z)&=&\varepsilon(x, y)\Big(
\alpha(y)\vdash(x\dashv z)-(y\vdash x)\dashv \alpha(z)\Big),\label{als3}\\
\alpha(x)\vdash(y\vdash z)-(x\vdash y)\vdash \alpha(z)&=&\varepsilon(x, y)\Big(
\alpha(y)\vdash(x\vdash z)-(y\vdash x)\vdash \alpha(z)\Big),\label{als4}
\end{eqnarray}
for all $x, y, z\in \mathcal{H}(S)$.
\end{definition}
\begin{remark}
 Relation \eqref{als4} means that $(S, \vdash, \varepsilon, \alpha)$ is a Hom-left-symmetric color algebra. So any Hom-left-symmetric color
algebra is a Hom-left-symmetric color dialgebra.
\end{remark}
 \begin{definition}[\cite{BD}] \label{dia}
 A Hom-associative color dialgebra is a quintuple $(D, \dashv, \vdash, \varepsilon, \alpha)$, where $D$ is a $G$-graded linear space,
$\dashv, \vdash : D\otimes D\rightarrow D$ are even bilinear maps, $\varepsilon : G\otimes G\rightarrow \mathbb{K}^*$ is a bicharacter
 and $\alpha : D\rightarrow D$ is an even linear map such that the following  five axioms
 \begin{eqnarray}
(x\vdash y)\dashv\alpha(z)&\stackrel{}{=}&\alpha(x)\vdash(y\dashv z), \label{dia1}\\
  \alpha(x)\dashv (y\dashv z)&\stackrel{}{=}&(x\dashv y)\dashv\alpha(z),\label{dia2}\\
(x\dashv y)\dashv\alpha(z)&=&\alpha(x)\dashv(y\vdash z),\label{dia3}\\
(x\vdash y)\vdash\alpha(z)&=&\alpha(x)\vdash(y\vdash z),\label{dia4}\\
\alpha(x)\vdash(y\vdash z)&=&(x\dashv y)\vdash\alpha(z),\label{dia5}
 \end{eqnarray}
are satisfied for $x, y, z\in \mathcal{H}(D)$.
\end{definition}
\begin{remark}\label{bk4}
 If $(A, \dashv, \vdash, \varepsilon, \alpha)$ is a Hom-associative color dialgebra in which $\dashv=\vdash=:\mu$, then
$(A, \mu, \varepsilon, \alpha)$ is a Hom-associative  color algebra. Conversely, any Hom-associative color algebra $(A, \mu, \varepsilon, \alpha)$
is a Hom-associative color dialgebra with $\dashv:=\mu=:\vdash$.
\end{remark}

\begin{lemma}\label{iibb}
 A Hom-left-symmetric color dialgebra $S$ is a Hom-associative color dialgebra if and only if both products of $S$ are color Hom-associative.
\end{lemma}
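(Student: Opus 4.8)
The plan is to rewrite the hypothesis ``both products of $S$ are color Hom-associative'' in terms of the defining axioms of a Hom-associative color dialgebra, and then to play the two families of identities against each other. Unwinding the definition from \cite{LY}, the assertion that $(S,\dashv,\varepsilon,\alpha)$ is a Hom-associative color algebra is precisely axiom \eqref{dia2}, and the assertion that $(S,\vdash,\varepsilon,\alpha)$ is one is precisely axiom \eqref{dia4}. Thus ``both products color Hom-associative'' is equivalent to the conjunction of \eqref{dia2} and \eqref{dia4}. With this reformulation the forward implication is immediate: a Hom-associative color dialgebra satisfies all five axioms \eqref{dia1}--\eqref{dia5}, in particular \eqref{dia2} and \eqref{dia4}.

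For the converse I would assume \eqref{dia2} and \eqref{dia4} and recover the three remaining axioms \eqref{dia1}, \eqref{dia3}, \eqref{dia5} using the Hom-left-symmetric color dialgebra identities \eqref{als1}--\eqref{als4}. Axiom \eqref{dia3} falls out at once: starting from $(x\dashv y)\dashv\alpha(z)$, rewrite it as $\alpha(x)\dashv(y\dashv z)$ by \eqref{dia2}, then apply \eqref{als1} to replace $y\dashv z$ by $y\vdash z$, yielding $\alpha(x)\dashv(y\vdash z)$. Symmetrically, axiom \eqref{dia5} follows from $\alpha(x)\vdash(y\vdash z)=(x\vdash y)\vdash\alpha(z)$ via \eqref{dia4}, followed by \eqref{als2} to turn $x\vdash y$ into $x\dashv y$.

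The main step, and the only one requiring more than a direct substitution, is axiom \eqref{dia1}. Here I would activate \eqref{als3}: under hypothesis \eqref{dia2} its entire left-hand side vanishes, so \eqref{als3} collapses to
\[
0=\varepsilon(x,y)\Big(\alpha(y)\vdash(x\dashv z)-(y\vdash x)\dashv\alpha(z)\Big).
\]
Since the bicharacter takes values in $\mathbb{K}^*$, the factor $\varepsilon(x,y)$ is invertible and may be cancelled, giving $\alpha(y)\vdash(x\dashv z)=(y\vdash x)\dashv\alpha(z)$ for all homogeneous $x,y,z$. Relabelling $x\leftrightarrow y$ (legitimate because the identity holds for all homogeneous elements) produces exactly $\alpha(x)\vdash(y\dashv z)=(x\vdash y)\dashv\alpha(z)$, which is \eqref{dia1}. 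With \eqref{dia1}, \eqref{dia2}, \eqref{dia3}, \eqref{dia4} and \eqref{dia5} all in force, $S$ is a Hom-associative color dialgebra.

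I expect the only delicate point to be recognising that \eqref{als3} is the identity to exploit for \eqref{dia1}, together with the bookkeeping of the variable swap and the cancellation of the nonzero commutation factor. As a sanity check one should verify that \eqref{als4} is merely consistent with \eqref{dia4} (cancelling $\varepsilon(x,y)$ there and swapping $x\leftrightarrow y$ returns \eqref{dia4} itself) and imposes no additional constraint, so that no axiom is lost in the argument.
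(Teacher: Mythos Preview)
Your proof is correct and follows the same route as the paper: the paper's proof is extremely terse, recording only the key step of extracting the mixed axiom from \eqref{als3} once the left-hand side vanishes by Hom-associativity of $\dashv$, while you spell out in addition how \eqref{dia3} and \eqref{dia5} fall out of \eqref{als1}, \eqref{als2} combined with \eqref{dia2}, \eqref{dia4}. (Note that the paper's sentence ``from \eqref{als3}, we get axiom \eqref{als1}'' appears to be a typo for \eqref{dia1}, which is exactly what you derive.)
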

\begin{proof}
If a Hom-left-symmetric color dialgebra $S$ is a Hom-associative color dialgebra, then both products $\dashv$ and $\vdash$ defined over $S$
are Hom-associative according to Definition \ref{dia}.
Conversely, if each product of a Hom-left-symmetric color dialgebra
 is Hom-associative, then from \eqref{als3}, we get axiom \eqref{als1}.
\end{proof}

We need the below definition for he next theorem.
\begin{definition}
 A Nijenhuis operator over a Hom-associative color algebra  $(A, \mu, \varepsilon, \alpha)$ is an even linear map $N : A\rightarrow A$ such
that $\alpha\circ N=N\circ\alpha$ and
\begin{eqnarray}
 \mu(N(x), N(y))=N\Big(\mu(N(x), y)+\mu(x, N(y))-N(\mu(x, y))\Big),
\end{eqnarray}
for all $x, y\in\mathcal{H}(A)$.
 \end{definition}
\begin{theorem}
 Let $(A, \mu, \varepsilon, \alpha)$ be a Hom-associative color algebra and $N: A\rightarrow A$ a Nijenhuis operator. Then the new multiplication
$\mu_N : A \rightarrow A$ given by
$$\mu^N(x, y)=\mu(N(x), y)+\mu(x, N(y))-N(\mu(x, y)),$$
makes $A$ into a Hom-associative color algebra.
\end{theorem}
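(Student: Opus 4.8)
The plan is to verify the single defining identity \eqref{aca} for the triple $(A,\mu^N,\alpha)$, keeping $\alpha$ as the (unchanged) twisting map. Since \eqref{aca} carries no $\varepsilon$-factor, the commutation factor plays no active role in the argument and is simply transported along; homogeneity of the elements is needed only to see that $\mu^N$ is again even, which is immediate since $\mu$, $\alpha$ and $N$ are all even. Any multiplicativity of $\alpha$ that one may wish to require transfers routinely from $\mu$ to $\mu^N$ using $\alpha N=N\alpha$, so the entire content of the theorem is the Hom-associativity of $\mu^N$.

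The first step is to rewrite the Nijenhuis condition in the equivalent morphism form $\mu(N(x),N(y))=N(\mu^N(x,y))$, which holds because the right-hand side of the Nijenhuis identity is exactly $N$ applied to $\mu^N(x,y)$. I would then expand the two Hom-associators $\mu^N(\mu^N(x,y),\alpha(z))$ and $\mu^N(\alpha(x),\mu^N(y,z))$ using the definition of $\mu^N$, simplifying each inner factor $N(\mu^N(\cdot,\cdot))$ to $\mu(N(\cdot),N(\cdot))$ via this morphism identity and replacing every $N\alpha$ by $\alpha N$. Each side then becomes a sum of seven terms.

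Next I would match terms across the two sides. Four pairs coincide after a single use of the Hom-associativity \eqref{aca} of the original product $\mu$: three pairs of terms carrying no outer $N$, and one pair carrying an outer $N$. What remains on each side are three residual terms: one product of the form $\mu(N(\cdot),N(\cdot))$ in which both $N$'s sit on non-product factors, and two terms with an outer $N$. These are precisely the places where \eqref{aca} alone is powerless, because an $N$ is attached to a factor that is not itself a $\mu$-product.

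The main obstacle, and the heart of the proof, is the cancellation of these six residual terms, and the key tool is two further applications of the Nijenhuis identity. Applying it to the pairs $(\mu(x,y),\alpha(z))$ and $(\alpha(x),\mu(y,z))$, each instance rewrites a product of two $N$-values as the sum of the corresponding outer-$N$ residual terms minus a single new doubly-nested term, namely $N(N(\mu(\mu(x,y),\alpha(z))))$ and $N(N(\mu(\alpha(x),\mu(y,z))))$ respectively. Substituting these two relations, every residual term cancels and one is left with exactly $N\big(N(\mu(\mu(x,y),\alpha(z))-\mu(\alpha(x),\mu(y,z)))\big)$, which vanishes by a final appeal to the Hom-associativity \eqref{aca} of $\mu$. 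This establishes \eqref{aca} for $\mu^N$, and hence $(A,\mu^N,\varepsilon,\alpha)$ is a Hom-associative color algebra.
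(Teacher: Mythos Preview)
Your argument is correct. The paper itself states this theorem without proof, so there is nothing to compare against; your sketch supplies precisely the standard verification. The key observations you isolate---the rewriting $N(\mu^N(x,y))=\mu(N(x),N(y))$, the four cancellations by Hom-associativity of $\mu$ (three without and one with an outer $N$), and the two further applications of the Nijenhuis identity to the pairs $(\mu(x,y),\alpha(z))$ and $(\alpha(x),\mu(y,z))$ that collapse the remaining six terms down to $N^2$ of the $\mu$-associator---are exactly what is needed, and I checked that the bookkeeping comes out as you describe. Your remark that the bicharacter $\varepsilon$ plays no role (since axiom \eqref{aca} carries no $\varepsilon$-factor) is also to the point.
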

\begin{corollary}
 Let $(A, \mu, \varepsilon, \alpha)$ be an associative color algebra, $\alpha : A\rightarrow A$ be an even endomorphism and $N: A\rightarrow A$
be a Nijenhuis operator commuting with $\alpha$. Then
$\mu_\alpha^N=\alpha\circ \mu^N$
is a Hom-associative color algebra.
\end{corollary}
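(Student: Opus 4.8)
The plan is to obtain $\mu_\alpha^N$ as the composition of two constructions that are already available: the Nijenhuis deformation of the preceding theorem, followed by Yau's twisting by the endomorphism $\alpha$. First I would note that an associative color algebra $(A,\mu,\varepsilon)$ is exactly a Hom-associative color algebra whose twisting map is the identity, i.e. $(A,\mu,\varepsilon,\mathrm{id})$ satisfies \eqref{aca}. Since $N$ commutes with $\mathrm{id}$ trivially and satisfies the Nijenhuis identity relative to $\mu$, the preceding theorem applies with twisting map $\mathrm{id}$ and gives that $(A,\mu^N,\varepsilon,\mathrm{id})$ is Hom-associative; as the twisting map is the identity, this is precisely the statement that the deformed product $\mu^N$ is again associative.

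The key step, and the only place where the hypothesis $\alpha\circ N=N\circ\alpha$ enters, is to verify that $\alpha$ remains an even endomorphism of the new algebra $(A,\mu^N,\varepsilon)$. Expanding
$$\alpha\big(\mu^N(x,y)\big)=\alpha\big(\mu(N(x),y)\big)+\alpha\big(\mu(x,N(y))\big)-\alpha\big(N(\mu(x,y))\big),$$
and using that $\alpha$ is a morphism for $\mu$ together with $\alpha N=N\alpha$, each summand transforms as $\alpha(\mu(N(x),y))=\mu(N\alpha(x),\alpha(y))$, $\alpha(\mu(x,N(y)))=\mu(\alpha(x),N\alpha(y))$ and $\alpha(N(\mu(x,y)))=N(\mu(\alpha(x),\alpha(y)))$, so that the alternating sum collapses to $\mu^N(\alpha(x),\alpha(y))$. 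Hence $\alpha$ is an endomorphism of the associative color algebra $(A,\mu^N,\varepsilon)$.

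Finally I would invoke Yau's twisting in the color setting: if $(B,\nu,\varepsilon)$ is an associative color algebra and $\beta$ is an even endomorphism, then $(B,\beta\circ\nu,\varepsilon,\beta)$ is a Hom-associative color algebra. Taking $B=A$, $\nu=\mu^N$ and $\beta=\alpha$ yields that $(A,\alpha\circ\mu^N,\varepsilon,\alpha)=(A,\mu_\alpha^N,\varepsilon,\alpha)$ is Hom-associative, which is the assertion. Should one prefer not to quote Yau's twisting, the same conclusion follows by checking \eqref{aca} directly: using that $\alpha$ is a morphism of $\mu^N$ and that $\mu^N$ is associative, both sides of $\mu_\alpha^N(\alpha(x),\mu_\alpha^N(y,z))=\mu_\alpha^N(\mu_\alpha^N(x,y),\alpha(z))$ reduce to $\alpha\big(\mu^N(\mu^N(\alpha(x),\alpha(y)),\alpha(z))\big)$, and no sign corrections appear since the twisting involves $\alpha$ only and not $\varepsilon$.

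I expect the main difficulty to be organizational rather than computational. The two auxiliary maps play different roles — $N$ deforms the multiplication while $\alpha$ twists it — and the commutation relation $\alpha N=N\alpha$ is precisely what makes these two operations compatible, both by keeping $\alpha$ a morphism of $\mu^N$ and by ensuring that the order of the two steps is immaterial (one could equally Yau-twist first and then apply the preceding theorem). Care is needed to apply the Nijenhuis theorem with twisting map $\mathrm{id}$, so that ``Hom-associative'' legitimately degenerates to ``associative'' at the intermediate stage.
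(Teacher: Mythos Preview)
Your proposal is correct. The paper states this corollary without proof, but your two-step argument --- apply the preceding Nijenhuis theorem with twisting map $\mathrm{id}$ to obtain that $\mu^N$ is associative, check that $\alpha$ is an endomorphism of $\mu^N$ via $\alpha N=N\alpha$, and then Yau-twist by $\alpha$ --- is exactly the route the placement of the corollary and the paper's overall emphasis on Yau's twisting (Section~\ref{sec:Yaustwistinggeneralization}) suggest the authors have in mind.
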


Now we introduce averaging operator in order to produce Hom-associative color dialgebras from Hom-associative color algebras.
\begin{definition}
 An averaging operator over a Hom-associative color algebra  $(A, \mu, \varepsilon, \alpha)$ is an even linear map $\beta : A\rightarrow A$ such
that $\alpha\circ\beta=\beta\circ\alpha$ and
\begin{eqnarray}
 \beta(\mu(\beta(x), y)=\mu(\beta(x), \beta(y))=\beta(\mu(x, \beta(y))), \label{avo1}
\end{eqnarray}
for all $x, y\in\mathcal{H}(A)$.
 \end{definition}

\begin{theorem}\label{bk2}
 Let $(A, \cdot, \varepsilon)$ be an associative color algebra and $\alpha : A\rightarrow A$ an averaging operator such that
$(A, \cdot, \varepsilon, \alpha)$ be a Hom-associative color algebra. For any $x, y\in\mathcal{H}(A)$, define new operations on $A$ by
$$x\vdash y=\alpha(x)\cdot y\quad\mbox{and}\quad x\dashv y:=x\cdot\alpha(y).$$
Then $(A, \dashv, \vdash, \varepsilon, \alpha)$ is a Hom-associative color dialgebra.
\end{theorem}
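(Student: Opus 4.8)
The plan is to verify the five diassociativity axioms \eqref{dia1}--\eqref{dia5} by direct expansion. The first observation is structural: none of these identities involves the commutation factor $\varepsilon$, and the new products $x\vdash y=\alpha(x)\cdot y$ and $x\dashv y=x\cdot\alpha(y)$ introduce no sign when rewritten in terms of $\cdot$. Hence the whole computation is sign-free, and I only need three tools: the ordinary associativity of $\cdot$; the averaging identities $\alpha(\alpha(x)\cdot y)=\alpha(x)\cdot\alpha(y)=\alpha(x\cdot\alpha(y))$ obtained from \eqref{avo1} with $\beta=\alpha$ (the commuting condition $\alpha\circ\beta=\beta\circ\alpha$ then being automatic); and the Hom-associativity $\alpha(x)\cdot(y\cdot z)=(x\cdot y)\cdot\alpha(z)$ from \eqref{aca}.

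For each axiom I would expand both sides from the definitions, being careful that $\vdash$ applies $\alpha$ to the \emph{left} argument while $\dashv$ applies it to the \emph{right} one; this produces nested expressions such as $\alpha(\alpha(x)\cdot y)$, $\alpha(x\cdot\alpha(y))$ and iterates $\alpha^2$. Two systematic moves then finish the job: collapse every $\alpha(\alpha(x)\cdot y)$ or $\alpha(x\cdot\alpha(y))$ to $\alpha(x)\cdot\alpha(y)$ by the averaging identities, and use Hom-associativity exactly once to shift an $\alpha$ between the outermost factors. Concretely, both \eqref{dia4} and \eqref{dia5} reduce to the common value $(\alpha(x)\cdot\alpha(y))\cdot\alpha(z)$: one side reaches it by averaging, the other by applying \eqref{aca} to $\alpha^2(x)\cdot(\alpha(y)\cdot z)$. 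Likewise \eqref{dia2} and \eqref{dia3} both reduce to $\alpha(x)\cdot(\alpha(y)\cdot\alpha(z))=(x\cdot\alpha(y))\cdot\alpha^2(z)$, and \eqref{dia1} reduces to $\alpha^2(x)\cdot(y\cdot\alpha(z))=(\alpha(x)\cdot y)\cdot\alpha^2(z)$, each via a single use of \eqref{aca}.

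The only real care needed is the bookkeeping of the iterated maps $\alpha^2$ and applying the averaging identities in the correct direction; there is no conceptual obstacle, but it is worth pointing out that both hypotheses are genuinely indispensable. In particular, averaging alone does not suffice: for \eqref{dia4} it merely rewrites $(\alpha(x)\cdot\alpha(y))\cdot\alpha(z)$ as $\alpha(\alpha(x)\cdot\alpha(y)\cdot z)$, which cannot be matched with $\alpha^2(x)\cdot(\alpha(y)\cdot z)$ without invoking \eqref{aca}. Thus each of the five axioms follows from the averaging identities together with exactly one application of \eqref{aca}, and assembling these verifications shows that $(A,\dashv,\vdash,\varepsilon,\alpha)$ meets all requirements of Definition \ref{dia}, i.e. is a Hom-associative color dialgebra.
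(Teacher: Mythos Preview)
Your proof is correct and follows exactly the paper's approach: expand each axiom of Definition \ref{dia} in terms of the original product, collapse expressions of the form $\alpha(\alpha(x)\cdot y)$ or $\alpha(x\cdot\alpha(y))$ via the averaging identity \eqref{avo1}, and close with one application of Hom-associativity \eqref{aca}. The paper writes out only axiom \eqref{dia2} and declares the rest analogous, so your account is in fact more complete, but methodologically identical.
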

\begin{proof}
 We prove only one axiom. The others being proved similarly. For any $x, y, z\in \mathcal{H}(A)$,
\begin{eqnarray}
 \alpha(x)\dashv(y\dashv z)-(x\dashv y)\dashv\alpha(z)
&=&\alpha(x)\cdot\alpha(y\cdot\alpha(z))-(x\cdot\alpha(y))\cdot\alpha^2(z)\nonumber\\
&=& \alpha(x)\cdot(\alpha(y)\cdot\alpha(z))-(x\cdot\alpha(y))\cdot\alpha^2(z)\quad\;(\mbox{by}\;\eqref{avo1})\nonumber\\
&=& \alpha(x)\cdot(\alpha(y)\cdot\alpha(z))-\alpha(x)\cdot(\alpha(y)\cdot\alpha(z))\;\;(\mbox{by}\;\eqref{aca})\nonumber\\
&=&0.\nonumber
\end{eqnarray}
This proves the axiom.
\end{proof}

\begin{definition}[\cite{BD}]
A Hom-Poisson color  dialgebra is a sextuple $(P, \dashv, \vdash, \{-, -\}, \varepsilon, \alpha)$ in which $P$ is a $G$-graded linear space,
 $\dashv, \vdash, \{-, -\} : P\otimes P\rightarrow P$ are three even bilinear maps, $\varepsilon : G\otimes G\rightarrow \mathbb{K}^*$ is
a bicharacter and  $\alpha : P\rightarrow P$ is an even linear map such that
 \begin{eqnarray}
  \{x\dashv y, \alpha(z)\}&=&\alpha(x)\dashv\{y, z\}+\varepsilon(y, z)\{x, z\}\dashv\alpha(y),\label{hpd1}\\
\{x\vdash y, \alpha(z)\}&=&\alpha(x)\vdash\{y, z\}+\varepsilon(y, z)\{x, z\}\vdash\alpha(y),\label{hpd2}\\
\{\alpha(x), y\dashv z\}&=&\varepsilon(x, y)\alpha(y)\vdash\{x, z\}+\{x, y\}\dashv\alpha(z)=\{\alpha(x), y\vdash z\},\label{hpd3}
 \end{eqnarray}
for all $x, y, z\in\mathcal{H}(P)$.
 \end{definition}
\begin{lemma}[\cite{BD}] \label{diaxx}
 Let $(D, \dashv, \vdash, \varepsilon, \alpha)$ be a Hom-associative color dialgebra. \\
 Then $(D, \dashv, \vdash, \{-, -\}, \varepsilon, \alpha)$ is a
  Hom-Poisson color dialgebra, where
\begin{eqnarray}
 \{x, y\}=x\dashv y-\varepsilon(x, y) y\vdash x,\nonumber
\end{eqnarray}
for any $x, y\in \mathcal{H}(D)$.
\end{lemma}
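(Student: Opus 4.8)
The plan is to verify directly the three defining identities \eqref{hpd1}, \eqref{hpd2} and \eqref{hpd3} of a Hom-Poisson color dialgebra by substituting the definition $\{x, y\} = x \dashv y - \varepsilon(x, y)\, y \vdash x$ and reducing everything to the five Hom-associative dialgebra axioms \eqref{dia1}--\eqref{dia5}. Since both products are even, a product of two homogeneous elements has degree equal to the sum of the two degrees, so whenever a bracket is applied to such a product one expands the resulting commutation factor through the bicharacter identities, for instance $\varepsilon(x \dashv y, \alpha(z)) = \varepsilon(x, z)\varepsilon(y, z)$ and $\varepsilon(\alpha(x), y \dashv z) = \varepsilon(x, y)\varepsilon(x, z)$. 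With this bookkeeping in place the verification becomes a term-by-term matching between the two sides of each axiom.

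For \eqref{hpd1} I would expand the left-hand side $\{x \dashv y, \alpha(z)\}$ into $(x \dashv y) \dashv \alpha(z) - \varepsilon(x, z)\varepsilon(y, z)\, \alpha(z) \vdash (x \dashv y)$, and expand the right-hand side into its four summands. The first left term matches $\alpha(x) \dashv (y \dashv z)$ by \eqref{dia2}; the second left term matches $(z \vdash x) \dashv \alpha(y)$ by \eqref{dia1}; and the two surviving right-hand terms cancel after applying \eqref{dia3}. The identity \eqref{hpd2} is handled identically with the roles of the two products interchanged: \eqref{dia1} matches the $\dashv$-term, \eqref{dia4} matches the double-$\vdash$ term, and \eqref{dia5} forces the remaining pair to cancel.

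The main obstacle, and the only genuinely double computation, is the chain identity \eqref{hpd3}, which asserts both $\{\alpha(x), y \dashv z\} = \varepsilon(x, y)\alpha(y) \vdash \{x, z\} + \{x, y\} \dashv \alpha(z)$ and the equality of this with $\{\alpha(x), y \vdash z\}$. For the first equality I would expand $\{\alpha(x), y \dashv z\}$ and match its two terms against the four right-hand summands: \eqref{dia2} identifies $\alpha(x) \dashv (y \dashv z)$ with $(x \dashv y) \dashv \alpha(z)$, \eqref{dia5} rewrites $(y \dashv z) \vdash \alpha(x)$ as $\alpha(y) \vdash (z \vdash x)$, and \eqref{dia1} makes the last two survivors cancel. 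The closing equality $\{\alpha(x), y \dashv z\} = \{\alpha(x), y \vdash z\}$ reduces to showing $\alpha(x) \dashv (y \dashv z) = \alpha(x) \dashv (y \vdash z)$ together with $(y \dashv z) \vdash \alpha(x) = (y \vdash z) \vdash \alpha(x)$; the former follows by combining \eqref{dia2} with \eqref{dia3}, and the latter by combining \eqref{dia4} with \eqref{dia5}.

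I expect the difficulty to be purely clerical: keeping track of the $\varepsilon$-factors, which must come out identical on the matched terms, since this is precisely what makes the cancellations work, and choosing the correctly relabeled instance of each dialgebra axiom. No idea beyond \eqref{dia1}--\eqref{dia5} and the bicharacter rules is required.
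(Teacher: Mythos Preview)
Your verification is correct and is precisely the standard approach: expand the bracket, track the $\varepsilon$-factors via the bicharacter identities, and reduce each of \eqref{hpd1}--\eqref{hpd3} to a combination of the dialgebra axioms \eqref{dia1}--\eqref{dia5}. Note that the paper itself does not supply a proof of this lemma; it is quoted from \cite{BD}, so there is no in-paper argument to compare against, but your outline is exactly what such a proof would contain.
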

\begin{theorem}
 Let $(D, \dashv, \vdash, \varepsilon, \alpha)$ be a Hom-left-symmetric color dialgebra with Hom-associative products.
 Then $(D, \dashv, \vdash, \{-, -\}, \varepsilon, \alpha)$ is a  Hom-Poisson color dialgebra, where
\begin{eqnarray}
 \{x, y\}=x\dashv y-\varepsilon(x, y) y\vdash x,\nonumber
\end{eqnarray}
for any $x, y\in \mathcal{H}(D)$.
\end{theorem}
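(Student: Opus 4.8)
The plan is to recognize that the two structural lemmas already established, namely Lemma \ref{iibb} and Lemma \ref{diaxx}, chain together to give the result almost immediately, so the proof should be a short deduction rather than a direct verification of the Poisson axioms \eqref{hpd1}--\eqref{hpd3}.

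First I would invoke Lemma \ref{iibb}. By hypothesis $(D, \dashv, \vdash, \varepsilon, \alpha)$ is a Hom-left-symmetric color dialgebra whose two products $\dashv$ and $\vdash$ are both color Hom-associative. These are precisely the conditions appearing in the equivalence of Lemma \ref{iibb}, so from the relevant direction of that equivalence I would conclude that $(D, \dashv, \vdash, \varepsilon, \alpha)$ is in fact a Hom-associative color dialgebra in the sense of Definition \ref{dia}; that is, the five axioms \eqref{dia1}--\eqref{dia5} hold.

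Next I would apply Lemma \ref{diaxx} to this Hom-associative color dialgebra. Since the bracket defined in the present statement, $\{x, y\} = x \dashv y - \varepsilon(x, y)\, y \vdash x$, coincides verbatim with the bracket of Lemma \ref{diaxx}, no new computation is required: the conclusion of Lemma \ref{diaxx} asserts exactly that $(D, \dashv, \vdash, \{-, -\}, \varepsilon, \alpha)$ is a Hom-Poisson color dialgebra, which is the desired conclusion.

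The main (and essentially only) obstacle is verifying carefully that the hypotheses of Lemma \ref{iibb} are genuinely met, i.e. that the phrase \emph{with Hom-associative products} is to be read as \textbf{both} $\dashv$ \textbf{and} $\vdash$ being color Hom-associative, so that the reduction to a Hom-associative color dialgebra is legitimate. Once that identification is made, the statement follows formally from the previously proved lemmas, and one need not unfold the Hom-Poisson axioms \eqref{hpd1}--\eqref{hpd3} by hand.
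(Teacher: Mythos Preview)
Your proposal is correct and matches the paper's own proof exactly: the paper simply states that the result follows from Lemma \ref{iibb} and Lemma \ref{diaxx}. Your only added care—checking that ``with Hom-associative products'' means both $\dashv$ and $\vdash$ are color Hom-associative so that Lemma \ref{iibb} applies—is appropriate and is the sole point needing attention.
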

\begin{proof}
 It follows from Lemma \ref{iibb} and Lemma \ref{diaxx}.
\end{proof}
\section{Hom-tridendriform color algebras}
\label{sec:homtridendriformcoloralgebras}

We introduce  Hom-tridendriform color algebras, give some proprities and study their connection with Hom-post-Lie color algebras and color
 Hom-associative algebras.
\begin{definition}\label{tdd}
 A  Hom-tridendriform color algebra is a sextuple $(T, \dashv, \vdash, \cdot, \varepsilon, \alpha)$ consisting of a $G$-graded linear space $T$,
 three even bilinear maps  $\dashv, \vdash, \cdot : T\otimes T\rightarrow T$, a bicharacter $\varepsilon : G\times G\rightarrow\mathbb{K}^*$
 and  an even linear map $\alpha : T\rightarrow T$ satisfing
\begin{eqnarray}
(x\dashv y)\dashv\alpha(z)&=&\alpha(x)\dashv(y\dashv z+\varepsilon(z, y)y\vdash z+\varepsilon(z, y)y\cdot z),\label{t1}\\
(x\vdash y)\dashv\alpha(z)&=&\varepsilon(z, x)\alpha(x)\vdash(y\dashv z),\\
\alpha(x)\vdash(y\vdash z)&=&(\varepsilon(x, y)x\dashv y+x\vdash y+x\cdot y)\vdash\alpha(z),\\
(x\dashv y)\cdot\alpha(z)&=&\varepsilon(y, x)\alpha(x)\cdot(y\vdash z),\\
(x\vdash y)\cdot\alpha(z)&=&\alpha(x)\vdash(y\cdot z),\\
(x\cdot y)\dashv\alpha(z)&=&\varepsilon(z, x)\alpha(x)\cdot(y\dashv z),\\
(x\cdot y)\cdot\alpha(z)&=&\alpha(x)\cdot(y\cdot z),
\end{eqnarray}
for $x, y, z\in \mathcal{H}(T)$.
 \end{definition}
\begin{remark}
When the color Hom-associative  product is identically null, we get a Hom-dendriform color algebra \cite{BD}.
\end{remark}
\begin{example}
 Let $A=A_0\oplus A_1=<e_1, e_2>\oplus<e_3>$ be a three-dimensional superspace. The multiplications
\begin{eqnarray}
 e_2\dashv e_2&=& e_2\vdash e_2=ae_1, \quad e_2\cdot e_2=-ae_1, \nonumber\\
e_3\dashv e_3&=& e_3\vdash e_3=be_1, \quad e_3\cdot e_3=be_1, \nonumber
\end{eqnarray}
and the even linear map $\alpha : A\rightarrow A$ defined by
$$\alpha(e_1)=e_1, \quad \alpha(e_2)=e_1+e_2, \quad \alpha(e_3)=-e_3,$$
make $A$ into a Hom-tridendriform color algebra, for any $a, b\in\mathbb{R}$.
\end{example}

\begin{proposition}
 Let $(T, \dashv, \vdash, \cdot, \varepsilon, \alpha)$ be a  Hom-tridendriform color algebra. \\ Then
 $(T, \dashv^{op}, \vdash^{op}, \cdot^{op}, \varepsilon, \alpha)$  is also a  Hom-tridendriform color algebra, with
$$x\dashv^{op}y:=y\vdash x,\quad x\vdash^{op}y:=y\dashv x,\quad x\cdot^{op}y:=y\cdot x,$$
for any $x, y\in \mathcal{H}(T)$.
\end{proposition}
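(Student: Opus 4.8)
The plan is to verify directly that each of the seven defining identities of Definition \ref{tdd} holds for the opposite products $(\dashv^{op},\vdash^{op},\cdot^{op})$, while keeping the \emph{same} twisting map $\alpha$ and the \emph{same} bicharacter $\varepsilon$. Since $\dashv^{op},\vdash^{op},\cdot^{op}$ are again even and $\alpha$ is unchanged, the grading and evenness conditions are automatic, so only the seven color identities need checking. For each of them I would substitute the definitions $x\dashv^{op}y=y\vdash x$, $x\vdash^{op}y=y\dashv x$, $x\cdot^{op}y=y\cdot x$, expand both sides back into the original products, and then recognize the resulting equation as one of the original seven identities read after the variable relabeling $x\leftrightarrow z$ (with $y$ fixed). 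The only ingredient beyond this mechanical substitution is the reconciliation of the commutation factors, for which the bicharacter relation $\varepsilon(a,b)\varepsilon(b,a)=1$, together with bi-multiplicativity, is the essential tool.

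To illustrate the mechanism, consider the opposite of \eqref{t1}. Using the definitions, its left-hand side $(x\dashv^{op}y)\dashv^{op}\alpha(z)$ collapses to $\alpha(z)\vdash(y\vdash x)$, while its right-hand side $\alpha(x)\dashv^{op}\bigl(y\dashv^{op}z+\varepsilon(z,y)\,y\vdash^{op}z+\varepsilon(z,y)\,y\cdot^{op}z\bigr)$ collapses to $\bigl(z\vdash y+\varepsilon(z,y)\,z\dashv y+\varepsilon(z,y)\,z\cdot y\bigr)\vdash\alpha(x)$, which is exactly the third identity of Definition \ref{tdd} with $x$ and $z$ interchanged. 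In the same spirit the second identity reduces to itself and the last identity (Hom-associativity of $\cdot$) reduces to itself; in the second case the swap $x\leftrightarrow z$ produces a factor $\varepsilon(x,z)$ against the prefactor $\varepsilon(z,x)$, and these cancel precisely because $\varepsilon(z,x)\varepsilon(x,z)=1$. Thus the four identities involving only the pair $\{\dashv,\vdash\}$ or only $\cdot$ pair up cleanly, the relabeling $x\leftrightarrow z$ sending the $\dashv$-dominated identity to the $\vdash$-dominated one and fixing the two self-dual ones.

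The step I expect to demand the most care is the verification of the three remaining identities that mix the color Hom-associative product $\cdot$ with $\dashv$ and $\vdash$. There the outer products are swapped, the inner arrow reverses, and a leftover commutation factor $\varepsilon(\cdot,\cdot)$ appears on one side after substitution; the whole content of the computation is to show that, after the relabeling $x\leftrightarrow z$ and repeated application of $\varepsilon(a,b)\varepsilon(b,a)=1$ (and the additivity rules for $\varepsilon$), the factor carried by each opposite identity coincides with the factor prescribed in the matching original identity. This bookkeeping of commutation factors across the $\cdot$-mixed relations is the genuine obstacle; the rest is routine substitution. I would therefore organize the write-up as a single table of the seven substitutions, check the four factor-free (or self-canceling) cases first, and devote the detailed verification to the three mixed identities, where the bicharacter identities must be invoked explicitly.
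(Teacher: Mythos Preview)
Your approach---direct verification of each of the seven axioms by substituting the definitions of $\dashv^{op},\vdash^{op},\cdot^{op}$ and reducing each resulting equation to one of the original identities under the relabeling $x\leftrightarrow z$, with the bicharacter identity $\varepsilon(a,b)\varepsilon(b,a)=1$ to reconcile the commutation factors---is exactly what the paper does. The paper's own proof consists of the single sentence ``The proof is straightforward by calculation by using axioms in Definition~\ref{tdd}'', so your outline is already more detailed than what appears there.
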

\begin{proof}
 The proof is straighforward by calculation by using axioms in Definition \ref{tdd}.
\end{proof}

\begin{proposition}
Let $(A, \dashv, \vdash, \cdot, \varepsilon, \alpha)$ be a  Hom-tridendriform color algebra. \\
Then $(A, \dashv, \vdash', \varepsilon, \alpha)$ is a Hom-dendriform color algebra, where
$$x\vdash' y:=x\vdash y+x\cdot y,$$
for any $x, y\in\mathcal{H}(T)$.
\end{proposition}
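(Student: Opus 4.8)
The plan is to unwind the definition of a Hom-dendriform color algebra, determine which of the seven defining identities of Definition \ref{tdd} survive, and then check those surviving identities for the pair $(\dashv, \vdash')$. By the Remark following Definition \ref{tdd}, a Hom-dendriform color algebra is exactly a Hom-tridendriform color algebra whose associative product $\cdot$ is identically zero. Setting $\cdot=0$ in Definition \ref{tdd}, the four axioms that carry a factor $\cdot$ on both sides collapse to $0=0$, leaving precisely three nontrivial identities in the two products $\dashv$ and $\vdash$ (here played by $\vdash'$): namely (D1) $(x\dashv y)\dashv\alpha(z)=\alpha(x)\dashv(y\dashv z+\varepsilon(z,y)\,y\vdash' z)$, (D2) $(x\vdash' y)\dashv\alpha(z)=\varepsilon(z,x)\alpha(x)\vdash'(y\dashv z)$, and (D3) $\alpha(x)\vdash'(y\vdash' z)=(\varepsilon(x,y)\,x\dashv y+x\vdash' y)\vdash'\alpha(z)$. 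These are what I must establish.

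First I would dispatch (D1): substituting $x\vdash' z=x\vdash z+x\cdot z$ on the right-hand side reproduces exactly the three-term expression $y\dashv z+\varepsilon(z,y)\,y\vdash z+\varepsilon(z,y)\,y\cdot z$ appearing in \eqref{t1}, so (D1) is nothing but \eqref{t1} rewritten and no computation is needed beyond this observation. Next I would verify (D2) by bilinearity: expand the left side as $(x\vdash y)\dashv\alpha(z)+(x\cdot y)\dashv\alpha(z)$, apply the second axiom of Definition \ref{tdd} to the first summand and the sixth axiom to the second, obtaining $\varepsilon(z,x)\alpha(x)\vdash(y\dashv z)+\varepsilon(z,x)\alpha(x)\cdot(y\dashv z)$; factoring out $\varepsilon(z,x)$ and recombining $\vdash$ with $\cdot$ into $\vdash'$ yields the right-hand side.

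The main work, and the step I expect to be the real obstacle, is (D3), since here both occurrences of $\vdash'$ expand and four cross-terms appear on the left. I would write the left-hand side as $\alpha(x)\vdash(y\vdash z)+\alpha(x)\vdash(y\cdot z)+\alpha(x)\cdot(y\vdash z)+\alpha(x)\cdot(y\cdot z)$ and then convert each summand into a product in which $\alpha(z)$ sits on the right, using the third, fifth, fourth, and seventh axioms of Definition \ref{tdd} respectively. The fourth axiom introduces a factor $\varepsilon(y,x)$ that must be inverted to $\varepsilon(x,y)$ via the bicharacter relation $\varepsilon(x,y)\varepsilon(y,x)=1$. Expanding the right-hand side of (D3) directly, by distributing $\vdash'=\vdash+\cdot$ over the three-term bracket, produces six terms, and a term-by-term comparison shows that the two sides carry the identical six summands $\varepsilon(x,y)(x\dashv y)\vdash\alpha(z)$, $(x\vdash y)\vdash\alpha(z)$, $(x\cdot y)\vdash\alpha(z)$, $(x\vdash y)\cdot\alpha(z)$, $\varepsilon(x,y)(x\dashv y)\cdot\alpha(z)$, $(x\cdot y)\cdot\alpha(z)$. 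Keeping the $\varepsilon$-factors correctly aligned in this bookkeeping is the only genuine subtlety; the super and ungraded cases are direct specializations.
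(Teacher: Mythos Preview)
Your proof is correct and is precisely the explicit verification the paper leaves implicit: the paper's own proof is the single sentence ``It comes immediately from Definition \ref{tdd}.'' Your argument unpacks this by identifying the three surviving dendriform axioms (D1)--(D3) and checking each via the appropriate tridendriform identities (axiom \eqref{t1} for (D1); axioms 2 and 6 for (D2); axioms 3, 4, 5, 7 together with $\varepsilon(x,y)\varepsilon(y,x)=1$ for (D3)), which is exactly what ``immediate from the definition'' means here.
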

\begin{proof}
 It comes immediately from Definition  \ref{tdd}.
\end{proof}

We need the following definition for the next proposition.
\begin{definition}
 Let $(A, \cdot, \varepsilon, \alpha)$ be a Hom-associative color algebra and $\lambda \in \mathbb{K}$. An even linear map $R : A\rightarrow A$
 is called a Rota-Baxter operator of weight $\lambda$ on $A$ if it satisfies the identities
\begin{eqnarray}
R\circ \alpha&=&\alpha\circ R,\\
R(x)\cdot R(y) &=& R\Big(R(x)\cdot y + x\cdot R(y) +\lambda x\cdot y\Big),
\end{eqnarray}
for any $x, y\in\mathcal{H}(A)$.
\end{definition}

\begin{theorem}\label{car1}
 Let $(A, \cdot, \varepsilon, \alpha, R)$ be a Rota-Baxter Hom-associative color algebra of weight $\lambda$.
Let us define three new operations $\dashv, \vdash$ and
$\ast$ on $A$ by
$$x\dashv y:=x\cdot R(y), \quad x\vdash y:=\varepsilon(x, y)R(x)\cdot y\quad\mbox{and}\quad x\ast y:=\lambda\varepsilon(x, y)x\cdot y.$$
Then $(A, \dashv, \vdash, \ast, \varepsilon, \alpha)$ is a Hom-tridendriform color algebra.
\end{theorem}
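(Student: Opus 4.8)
The plan is to verify, one at a time, the seven defining identities of Definition~\ref{tdd} for the triple of products $(\dashv,\vdash,\ast)$, with $\ast$ playing the role of the middle product written $\cdot$ there. Throughout I keep the symbol $\cdot$ for the underlying Hom-associative product of $A$ and exploit only the following ingredients: the Hom-associativity \eqref{aca}, the commutation relation $R\circ\alpha=\alpha\circ R$, the weight-$\lambda$ Rota-Baxter identity $R(x)\cdot R(y)=R\big(R(x)\cdot y+x\cdot R(y)+\lambda\, x\cdot y\big)$, and the multiplicativity of the bicharacter $\varepsilon$. In particular the multiplicativity of $\alpha$ is never needed.

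The mechanical engine is the same in every case. First I expand each occurrence of $\dashv,\vdash,\ast$ back into $\cdot$ and $R$, extracting the scalar commutation factors that $\vdash$ and $\ast$ carry. Second, wherever an $R$ lands on $\alpha(z)$ I rewrite $R(\alpha(z))=\alpha(R(z))$, so that both sides become honest $\cdot$-products of the shape $(\text{two factors})\cdot\alpha(\text{third})$ or $\alpha(\text{first})\cdot(\text{two factors})$. Third, I reassociate by \eqref{aca}. For instance, in \eqref{t1} the left side becomes $(x\dashv y)\dashv\alpha(z)=(x\cdot R(y))\cdot\alpha(R(z))=\alpha(x)\cdot\big(R(y)\cdot R(z)\big)$, whereas on the right side $\varepsilon(z,y)\,y\vdash z$ simplifies to $R(y)\cdot z$ and $\varepsilon(z,y)\,y\ast z$ to $\lambda\, y\cdot z$ (both using $\varepsilon(z,y)\varepsilon(y,z)=1$), so the right side collapses to $\alpha(x)\cdot R\big(y\cdot R(z)+R(y)\cdot z+\lambda\, y\cdot z\big)$; the two agree precisely by the Rota-Baxter identity applied to $R(y)\cdot R(z)$. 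Exactly the two axioms in which a product of two $R$-images appears — namely \eqref{t1} and its $\vdash$-analogue (the third identity of Definition~\ref{tdd}) — require the Rota-Baxter relation; the remaining five identities close off using only \eqref{aca} and $R\alpha=\alpha R$.

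The one point demanding genuine care, and the part I expect to be the main obstacle, is the bookkeeping of the commutation factors. Each of $\vdash$ and $\ast$ introduces an $\varepsilon$-prefactor keyed to the degrees of its two arguments, the composite products shift the relevant degrees (for example $\deg(y\dashv z)=\deg y+\deg z$), and several axioms already carry explicit coefficients such as $\varepsilon(z,x)$ or $\varepsilon(y,x)$. I would therefore reduce every such factor systematically with $\varepsilon(a,b+c)=\varepsilon(a,b)\varepsilon(a,c)$, $\varepsilon(a+b,c)=\varepsilon(a,c)\varepsilon(b,c)$ and $\varepsilon(a,b)\varepsilon(b,a)=1$, and confirm in each of the seven cases that, once the underlying $\cdot$-products have been made identical, the surviving scalar on the left matches that on the right. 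Verifying this sign cancellation in all seven axioms completes the proof that $(A,\dashv,\vdash,\ast,\varepsilon,\alpha)$ is a Hom-tridendriform color algebra.
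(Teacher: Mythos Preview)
Your proposal is correct. The paper actually states this theorem without proof, so there is no argument to compare against; your direct verification of the seven axioms of Definition~\ref{tdd}, using Hom-associativity \eqref{aca}, the commutation $R\alpha=\alpha R$, the weight-$\lambda$ Rota-Baxter identity, and the bicharacter rules, is precisely the expected argument, and your sample computation for \eqref{t1} and your remark that only \eqref{t1} and its $\vdash$-analogue invoke the Rota-Baxter relation are both accurate.
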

\begin{example}
Let $G=\{-1, +1\}$ be an abelian group and $A=A_{(-1)}\oplus A_{(1)}=<e_2>\oplus<e_1>$ a $G$-graded two-dimensional linear space. The quintuple
$(A, \cdot, \varepsilon, \alpha, R)$ is a Rota-Baxter Hom-associative color algebra of weight $\lambda$ with
\begin{itemize}
 \item the multiplication, $e_1\cdot e_1=-e_1,\quad e_1\cdot e_2=e_2,\quad e_2\cdot e_1=e_2,\quad e_2\cdot e_2=e_1$,
\item the bicharacter, $\varepsilon(i, j)=(-1)^{(i-1)(j-1)/4}$,
\item the even linear map $\alpha : A\rightarrow A$ defined by : $\alpha(e_1)=e_1,\quad \alpha(e_2)=-e_2$,
\item the Rota-Baxter operator $R : A\rightarrow A$ given by : $R(e_1)=-\lambda e_1, R(e_2)=-\lambda e_2$.
\end{itemize}
Therefore, $(A, \dashv, \vdash, \ast, \varepsilon, \alpha)$ is a Hom-tridendriform color algebra with
\begin{center}
\begin{tabular}{lll}
 $e_1\dashv e_1=\lambda e_1$,& $e_1\vdash e_1=\lambda e_1$& $e_1\ast e_1=-\lambda e_1$,     \\
$e_1\dashv e_2=-\lambda e_2$,&    $e_1\vdash e_2=-\lambda e_2$,& $e_1\ast e_2=\lambda e_2$,\\
$e_2\dashv e_1=-\lambda e_2,$&     $ e_2\vdash e_1=-\lambda e_2$& $e_2\ast e_1=\lambda e_2$,\\
$e_2\dashv e_2=-\lambda e_1,$&      $ e_2\vdash e_2=\lambda e_1,$& $e_2\ast e_2=-\lambda e_1$.
\end{tabular}
\end{center}


\end{example}

\begin{proposition}
 Let $(A, \cdot, \varepsilon, R, \alpha)$ be a Rota-Baxter Hom-associative color algebra of weight $\lambda$.
Define the operations $\dashv$ and $\vdash$ by
$$x\dashv y:=x\cdot R(y)+\lambda x\cdot y \quad\mbox{and}\quad x\vdash y:=\varepsilon(x, y)R(x)\cdot y.$$
Then $(A, \dashv, \vdash, \varepsilon)$ is a Hom-dendriform color algebra.
\end{proposition}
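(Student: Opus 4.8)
The plan is to verify directly the three defining identities of a Hom-dendriform color algebra. By the Remark following Definition~\ref{tdd}, these are exactly the seven axioms of a Hom-tridendriform color algebra with the middle product $\cdot$ set identically to zero; writing $\dashv,\vdash$ for the two operations of the proposition, I must check
\begin{align}
(x\dashv y)\dashv\alpha(z)&=\alpha(x)\dashv\big(y\dashv z+\varepsilon(z,y)\,y\vdash z\big),\nonumber\\
(x\vdash y)\dashv\alpha(z)&=\varepsilon(z,x)\,\alpha(x)\vdash(y\dashv z),\nonumber\\
\alpha(x)\vdash(y\vdash z)&=\big(\varepsilon(x,y)\,x\dashv y+x\vdash y\big)\vdash\alpha(z)\nonumber
\end{align}
for all $x,y,z\in\mathcal{H}(A)$. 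The only ingredients needed are Hom-associativity~\eqref{aca} of $\cdot$, the weight-$\lambda$ Rota-Baxter identity $R(u)\cdot R(v)=R\big(R(u)\cdot v+u\cdot R(v)+\lambda\,u\cdot v\big)$, the commutation $R\circ\alpha=\alpha\circ R$, and the multiplicativity of the bicharacter.

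The observation that makes each identity collapse is that the combinations on the right are precisely the arguments of $R$ occurring in the Rota-Baxter identity. For the first identity, using $\varepsilon(z,y)\varepsilon(y,z)=1$ the bracket becomes
\[
y\dashv z+\varepsilon(z,y)\,y\vdash z=R(y)\cdot z+y\cdot R(z)+\lambda\,y\cdot z=:w .
\]
Expanding the left-hand side by \eqref{aca} and $R\alpha=\alpha R$, then applying $R(y)\cdot R(z)=R(w)$, produces exactly $\alpha(x)\dashv w$. For the third identity one finds
\[
\varepsilon(x,y)\,x\dashv y+x\vdash y=\varepsilon(x,y)\,v,\qquad v:=R(x)\cdot y+x\cdot R(y)+\lambda\,x\cdot y ,
\]
so that, after rewriting $\alpha(R(x))\cdot(R(y)\cdot z)$ as $(R(x)\cdot R(y))\cdot\alpha(z)$ by \eqref{aca} and substituting $R(x)\cdot R(y)=R(v)$, both sides coincide. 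The second identity is the most direct: both sides reduce through \eqref{aca} to the same scalar multiple of $\alpha(R(x))\cdot(y\cdot R(z))$ together with its $\lambda\,\alpha(R(x))\cdot(y\cdot z)$ companion.

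The step I expect to be the main obstacle is the bookkeeping of the bicharacter rather than the associative manipulations. Because $\dashv,\vdash$ and $\alpha$ are even, applying $\vdash$ to homogeneous arguments produces bicharacter factors governed by the total degrees: for instance $\alpha(x)\vdash(y\dashv z)$ carries $\varepsilon(x,y)\varepsilon(x,z)$ and $(\varepsilon(x,y)v)\vdash\alpha(z)$ carries $\varepsilon(x,z)\varepsilon(y,z)$, since $y\dashv z$ and $\varepsilon(x,y)v$ are homogeneous of degrees $\deg(y)+\deg(z)$ and $\deg(x)+\deg(y)$. These must be matched against the explicit prefactors $\varepsilon(z,y),\varepsilon(z,x)$ appearing in the axioms and the $\varepsilon$'s hidden inside $x\vdash y=\varepsilon(x,y)R(x)\cdot y$, repeatedly using $\varepsilon(a,b)\varepsilon(b,a)=1$. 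Once each identity is brought to a common normal form by \eqref{aca}, the $\lambda$-homogeneous terms match coefficient by coefficient and the Rota-Baxter identity absorbs the single remaining $R(u)\cdot R(v)$ term, which completes the verification.
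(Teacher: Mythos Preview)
Your proposal is correct. The paper states this proposition without proof, so there is nothing to compare against beyond noting that your direct verification of the three Hom-dendriform axioms (via Hom-associativity, the Rota--Baxter identity, and the bicharacter relations) is exactly the argument required; in particular, your observation that $y\dashv z+\varepsilon(z,y)\,y\vdash z$ and $\varepsilon(x,y)\,x\dashv y+x\vdash y$ collapse to the Rota--Baxter expressions is the key step, and your handling of the bicharacter factors is accurate. One might hope to deduce the result formally from Theorem~\ref{car1} together with Propositions~3.4 and~3.5, but that route produces $x\dashv y=x\cdot R(y)+\lambda\varepsilon(x,y)\,x\cdot y$ rather than $x\cdot R(y)+\lambda\,x\cdot y$, so the direct check you carry out is genuinely needed.
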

\begin{corollary}
 Let $(A, \cdot, \varepsilon, R)$ be a Rota-Baxter Hom-associative color algebra of weight $0$. We define the even bilinear operations
$\dashv : A\times A\rightarrow A$ and $\vdash : A\times A\rightarrow A$ on $A$ by
$$x\dashv y:=x\cdot R(y)\quad\mbox{and}\quad x\vdash y:=\varepsilon(x, y)R(x)\cdot y.$$
Then $(A, \dashv, \vdash, \varepsilon)$ is a Hom-dendriform color algebra.
\end{corollary}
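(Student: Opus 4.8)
The plan is to recognize that this corollary is exactly the specialization $\lambda = 0$ of the immediately preceding proposition. In that proposition the left product is $x \dashv y = x \cdot R(y) + \lambda\, x \cdot y$; setting $\lambda = 0$ annihilates the term $\lambda\, x \cdot y$, leaving $x \dashv y = x \cdot R(y)$, while the right product $x \vdash y = \varepsilon(x, y) R(x) \cdot y$ is independent of $\lambda$. Since a weight-$0$ Rota-Baxter Hom-associative color algebra is in particular a weight-$\lambda$ one with $\lambda = 0$, and the two operations then coincide verbatim with those declared here, the statement follows in one line by invoking the proposition. This is the route I would record in the paper.

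For completeness I also sketch the direct verification, which is what underlies the proposition anyway. A Hom-dendriform color algebra is a Hom-tridendriform color algebra (Definition \ref{tdd}) in which the color Hom-associative product $\cdot$ is identically zero, so only three identities survive, for instance $(x \dashv y)\dashv \alpha(z) = \alpha(x)\dashv\big(y \dashv z + \varepsilon(z, y)\, y \vdash z\big)$ together with its two companions built from $\vdash$. The ingredients are the weight-$0$ Rota-Baxter relation $R(x)\cdot R(y) = R\big(R(x)\cdot y + x\cdot R(y)\big)$, the commutation $R\circ\alpha = \alpha\circ R$, and Hom-associativity \eqref{aca}. For the displayed axiom I would expand the left side to $(x\cdot R(y))\cdot \alpha(R(z))$; on the right, using $\varepsilon(z, y)\varepsilon(y, z) = 1$ the inner bracket collapses to $y\cdot R(z) + R(y)\cdot z$, the Rota-Baxter relation converts $R$ of this to $R(y)\cdot R(z)$, and a single use of \eqref{aca} matches the two sides. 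The remaining two identities are handled the same way.

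The only genuinely delicate point is the bookkeeping of the commutation factors $\varepsilon$ carried by the right product $\vdash$: each axiom comes with its own $\varepsilon$-weights, and one must confirm that the factors generated upon substituting the definitions recombine, via the bicharacter properties i)--iii), so as to cancel exactly. This is mechanical but error-prone, and is the step I would check most carefully; past it, the argument is a routine merger of the weight-$0$ Rota-Baxter identity with Hom-associativity.
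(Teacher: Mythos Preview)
Your proposal is correct and matches the paper's approach: the corollary is stated without proof in the paper, immediately after the proposition with general weight $\lambda$, so it is meant to be read as the specialization $\lambda=0$, exactly as you observe. The additional direct-verification sketch you supply is sound and more than what the paper provides.
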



In the below theorem, we associate a Hom-associative color algebra to any Hom-tridendriform color algebra.
\begin{theorem}\label{tp}
Let $(T, \dashv, \vdash, \cdot, \varepsilon, \alpha)$ be a  Hom-tridendriform color algebra. Then $(T, \ast, \varepsilon, \alpha)$ is
a Hom-associative color algebra, where
 $x\ast y=x\vdash y+\varepsilon(x, y)x\dashv y+ x\cdot y$.
\end{theorem}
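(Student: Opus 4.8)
The plan is to verify the single Hom-associativity axiom \eqref{aca} for the product $\ast$, namely
\[
\alpha(x)\ast(y\ast z)=(x\ast y)\ast\alpha(z),
\]
for all homogeneous $x,y,z\in\mathcal{H}(T)$, by expanding both sides through the trilinear definition of $\ast$ and reducing every resulting term to the common canonical shape $\alpha(x)\circ_1(y\circ_2 z)$ with $\circ_1,\circ_2\in\{\dashv,\vdash,\cdot\}$. Since each of $\dashv,\vdash,\cdot$ is even and $\alpha$ is even, the element $x\ast y$ is homogeneous of degree $|x|+|y|$; this is precisely what allows the bicharacter factors to be extracted cleanly through the multiplicativity $\varepsilon(a+b,c)=\varepsilon(a,c)\varepsilon(b,c)$ and $\varepsilon(a,b+c)=\varepsilon(a,b)\varepsilon(a,c)$.

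First I would expand the right-hand side $\alpha(x)\ast(y\ast z)$. Writing $y\ast z=y\vdash z+\varepsilon(y,z)\,y\dashv z+y\cdot z$ and then applying $\alpha(x)\ast(-)$, one obtains nine terms, each already of the canonical form $\alpha(x)\circ_1(y\circ_2 z)$; no tridendriform axiom is needed here, only bilinearity together with the factor $\varepsilon(x,y)\varepsilon(x,z)$ that the middle $\dashv$-summand of the outer $\ast$ produces from $\varepsilon(\alpha(x),y\ast z)$ by multiplicativity of the bicharacter. This yields nine explicit coefficients attached to the nine monomials $\alpha(x)\circ_1(y\circ_2 z)$.

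Next I would expand the left-hand side $(x\ast y)\ast\alpha(z)$. Here the inner products are left-nested, so this is where the seven defining identities of Definition \ref{tdd} enter. The $\vdash$-part $(x\ast y)\vdash\alpha(z)$ collapses in one stroke, since $\varepsilon(x,y)\,x\dashv y+x\vdash y+x\cdot y$ is exactly the combination appearing on the right of the third axiom, giving $\alpha(x)\vdash(y\vdash z)$. The $\dashv$-part $(x\ast y)\dashv\alpha(z)$ splits into three pieces treated by \eqref{t1}, by the second axiom $(x\vdash y)\dashv\alpha(z)=\varepsilon(z,x)\alpha(x)\vdash(y\dashv z)$, and by the sixth axiom $(x\cdot y)\dashv\alpha(z)=\varepsilon(z,x)\alpha(x)\cdot(y\dashv z)$. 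The $\cdot$-part $(x\ast y)\cdot\alpha(z)$ splits into three pieces treated by the fifth, fourth and seventh axioms. The outer $\ast$ contributes the factor $\varepsilon(x,z)\varepsilon(y,z)$ on its $\dashv$-summand, arising from $\varepsilon(x\ast y,\alpha(z))$. After these substitutions every term again lands in the canonical shape $\alpha(x)\circ_1(y\circ_2 z)$.

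The final step is to compare the two sides monomial by monomial across the nine shapes. The main obstacle, and the only real content of the computation, is the bookkeeping of the commutation factors: one must check that the scalar attached to each $\alpha(x)\circ_1(y\circ_2 z)$ agrees on both sides. This is where the relation $\varepsilon(a,b)\varepsilon(b,a)=1$ is used repeatedly, to cancel the factors $\varepsilon(z,x)$, $\varepsilon(z,y)$ and $\varepsilon(y,x)$ introduced by the tridendriform axioms against those produced by the degree-counting on the outer $\ast$. I expect all nine coefficients to coincide, which confirms \eqref{aca} for $\ast$ and establishes the theorem; the seven axioms are in fact calibrated precisely so that this matching holds.
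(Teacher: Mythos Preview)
Your proposal is correct and follows essentially the same approach as the paper: a direct expansion of the Hom-associator $(x\ast y)\ast\alpha(z)-\alpha(x)\ast(y\ast z)$ into its eighteen constituent terms, which are then cancelled in groups using the seven axioms of Definition~\ref{tdd}. Your version is in fact more explicit than the paper's, which simply lists the eighteen terms and asserts that they vanish by the axioms, whereas you identify precisely which axiom handles which block and track the bicharacter factors through the multiplicativity of $\varepsilon$.
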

\begin{proof}
For any $x, y, z\in\mathcal{H}(T)$
 \begin{eqnarray}
as_\alpha(x, y, z)&=&(x\vdash y)\vdash\alpha(z)+\varepsilon(x, y)(x\dashv y)\vdash\alpha(z)+(x\cdot y)\vdash\alpha(z)
+\varepsilon(x, z)\varepsilon(y, z)(x\vdash y)\dashv\alpha(z)\nonumber\\
&+&\varepsilon(x, z)\varepsilon(y, z)\varepsilon(x, y)(x\dashv y)\dashv\alpha(z)
+\varepsilon(x, z)\varepsilon(y, z)(x\cdot y)\dashv\alpha(z)+(x\vdash y)\cdot\alpha(z)\nonumber\\
&+&\varepsilon(x, y)(x\dashv y)\cdot\alpha(z)
+(x\cdot y)\cdot\alpha(z)-\alpha(x)\vdash(y\vdash z)-\varepsilon(y, z)\alpha(x)\vdash(y\dashv z)\nonumber\\
&-&\alpha(x)\vdash(y\cdot z)
-\varepsilon(x, y)\varepsilon(x, z)\alpha(x)\dashv(y\vdash z)
-\varepsilon(x, y)\varepsilon(x, z)\varepsilon(y, z)\alpha(x)\dashv(y\dashv z)\nonumber\\
&-&\varepsilon(x, y)\varepsilon(x, z)\alpha(x)\dashv(y\cdot z)-\alpha(x)\cdot(y\vdash z)-\varepsilon(y, z)\alpha(x)\cdot(y\dashv z)
-\alpha(x)\cdot(y\cdot z).\nonumber
 \end{eqnarray}
The left hand side vanishes by axioms in Definition \ref{tdd}.
This prove that $(A, \ast, \varepsilon, \alpha)$ is a Hom-associative color algebra.
This completes the proof.
\end{proof}
\begin{remark}
 Whenever $(T, \dashv, \vdash, \cdot, \varepsilon, \alpha)$ is commutative, we recover Lemma 3.3 \cite{IT}.
\end{remark}

\begin{definition}[\cite{BI1}]
  A  Hom-Poisson  color algebra consists of a $G$-graded linear space $A$, a multiplication $\mu : A\times A\rightarrow A$, an even bilinear bracket
 $\{\cdot, \cdot\} : A\times A\rightarrow A$ and
 an even linear map $\alpha : A\rightarrow A$ such that
\begin{enumerate}
 \item[(i)] $(A, \mu, \varepsilon, \alpha)$ is a  Hom-associative color algebra,
\item [(ii)]$(A, \{\cdot, \cdot\}, \varepsilon, \alpha)$ is a  Hom-Lie color algebra,
\item[(iii)]the color Hom-Leibniz identity is satisfied i.e.
\begin{eqnarray}
 \{\alpha(x), \mu(y, z)\}=\mu(\{x, y\}, \alpha(z))+\varepsilon(x,y)\mu(\alpha(y), \{x, z\}),\label{cca}
\end{eqnarray}
for any $x, y, z\in \mathcal{H}(A)$.
\end{enumerate}
\end{definition}

\begin{lemma}[\cite{BI1}]\label{fond}
Let $(A, \mu, \varepsilon, \alpha)$ be a  Hom-associative color algebra. \\
Then $(A, \mu, \{\cdot, \cdot\}=\mu-\varepsilon(\cdot, \cdot)\mu^{op}, \varepsilon, \alpha)$ is a  Hom-Poisson color algebra.
\end{lemma}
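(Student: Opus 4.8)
The plan is to verify directly the three defining conditions of a Hom-Poisson color algebra for the datum $(A, \mu, \{\cdot,\cdot\}, \varepsilon, \alpha)$ with $\{x,y\}=\mu(x,y)-\varepsilon(x,y)\mu(y,x)$. Condition (i) is nothing but the hypothesis that $(A,\mu,\varepsilon,\alpha)$ is a Hom-associative color algebra, so no work is needed there. What remains is condition (ii), that the color commutator makes $A$ into a Hom-Lie color algebra, and condition (iii), the color Hom-Leibniz identity \eqref{cca}. Throughout, the only structural facts available are Hom-associativity \eqref{aca} and the bicharacter identities, in particular $\varepsilon(a,b+c)=\varepsilon(a,b)\varepsilon(a,c)$ and $\varepsilon(a,b)\varepsilon(b,a)=1$.

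For condition (ii) I would first check $\varepsilon$-skew-symmetry by a one-line computation:
$$-\varepsilon(x,y)\{y,x\}=-\varepsilon(x,y)\mu(y,x)+\varepsilon(x,y)\varepsilon(y,x)\mu(x,y)=\mu(x,y)-\varepsilon(x,y)\mu(y,x)=\{x,y\},$$
using $\varepsilon(x,y)\varepsilon(y,x)=1$. The substantive part is the $\varepsilon$-Hom-Jacobi identity, namely the vanishing of the $\varepsilon$-weighted cyclic sum $\circlearrowleft_{x,y,z}\varepsilon(z,x)\{\alpha(x),\{y,z\}\}$; this is precisely the Hom-Lie-admissibility of a Hom-associative color algebra. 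I would expand each outer and inner bracket (so that every cyclic summand becomes a sum of $\mu$-terms built from triple products), rewrite every degree factor via the multiplicativity of $\varepsilon$, and then apply \eqref{aca} to turn each left-normed triple product into its right-normed form. Under the cyclic symmetrization the terms then cancel in pairs. This bookkeeping is the main obstacle, not for any conceptual reason but because of the care required to keep the commutation factors consistent: a factor of $\varepsilon$ is produced at each transposition of two homogeneous elements, and these must be tracked exactly so that the matching terms carry identical coefficients before they annihilate.

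For condition (iii) I would expand both sides of \eqref{cca} in terms of $\mu$. On the left, using $\varepsilon(\alpha(x),\mu(y,z))=\varepsilon(x,y)\varepsilon(x,z)$ and then \eqref{aca} on the triples $(x,y,z)$ and $(y,z,x)$, one obtains
$$\{\alpha(x),\mu(y,z)\}=\mu(\mu(x,y),\alpha(z))-\varepsilon(x,y)\varepsilon(x,z)\,\mu(\alpha(y),\mu(z,x)).$$
On the right, expanding $\{x,y\}$ and $\{x,z\}$ yields four $\mu$-terms; two of them reproduce the left-hand side verbatim, and the entire discrepancy collapses to the single required identity
$$\mu(\mu(y,x),\alpha(z))=\mu(\alpha(y),\mu(x,z)),$$
which is exactly \eqref{aca} applied to the triple $(y,x,z)$. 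Hence \eqref{cca} holds.

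In summary, (i) is immediate, (iii) follows from three applications of \eqref{aca} together with the bicharacter multiplicativity, and (ii) is the only genuinely computational step. I expect the $\varepsilon$-Hom-Jacobi identity in (ii) to be the principal difficulty, and I would organize that verification by symmetrizing the expanded expression so that the cancelling pairs become visible simultaneously.
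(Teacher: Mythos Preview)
Your proposal is correct. The paper does not supply its own proof of this lemma: it is quoted from \cite{BI1} and used as a black box (in Theorem~\ref{car2} and in the proof of the post-Lie construction). Your direct verification---(i) for free, (iii) by three applications of \eqref{aca} together with bicharacter multiplicativity, and (ii) by the standard Hom-Lie-admissibility computation for the color commutator---is exactly the expected argument and matches what is done in the cited source. Your computation for (iii) is accurate: after expanding both sides and using \eqref{aca} on $(x,y,z)$ and $(y,z,x)$, the residual difference is $\varepsilon(x,y)\bigl(\mu(\mu(y,x),\alpha(z))-\mu(\alpha(y),\mu(x,z))\bigr)$, which vanishes by \eqref{aca} on $(y,x,z)$, just as you say.
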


\begin{theorem}\label{car2}
 Let $(A, \dashv, \vdash, \cdot, \varepsilon, \alpha)$ be a Hom-tridendriform color algebra. \\
 Then $(A, \ast, [-, -], \varepsilon, \alpha)$ is a
 Hom-Poisson color algebra, where
$$x\ast y:=x\vdash y+\varepsilon(x, y)x\dashv y+x\cdot y\quad\mbox{and}\quad [x, y]:=x\ast y-\varepsilon(x, y)y\ast x.$$
\end{theorem}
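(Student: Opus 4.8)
The plan is to recognize the statement as an immediate composition of two results already established in the excerpt, namely Theorem \ref{tp} and Lemma \ref{fond}, so that essentially no new computation against the tridendriform axioms is required. First I would invoke Theorem \ref{tp}: since $(A, \dashv, \vdash, \cdot, \varepsilon, \alpha)$ is a Hom-tridendriform color algebra, the product
$$x\ast y = x\vdash y+\varepsilon(x, y)\,x\dashv y+ x\cdot y$$
makes $(A, \ast, \varepsilon, \alpha)$ into a Hom-associative color algebra. This directly settles condition (i) in the definition of a Hom-Poisson color algebra.

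Next I would observe that the bracket in the statement is nothing but the $\varepsilon$-commutator attached to the product $\ast$. Indeed $\ast^{op}(x, y) = \ast(y, x) = y\ast x$, so
$$[x, y] = x\ast y - \varepsilon(x, y)\, y\ast x = \bigl(\ast - \varepsilon(\cdot, \cdot)\ast^{op}\bigr)(x, y),$$
which is exactly the bracket appearing in Lemma \ref{fond} when the Hom-associative product there is specialized to $\ast$. I should double-check this identification carefully, since the matching of $\ast^{op}$ with $y\ast x$ (and hence the placement of the factor $\varepsilon(x,y)$) is the one spot where an ordering or sign slip could occur; but it is routine to confirm.

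With these two observations in hand, I would apply Lemma \ref{fond} to the Hom-associative color algebra $(A, \ast, \varepsilon, \alpha)$ obtained in the first step. The lemma then yields directly that $(A, \ast, [-, -], \varepsilon, \alpha)$ is a Hom-Poisson color algebra, which simultaneously delivers condition (ii) (that $(A, [-, -], \varepsilon, \alpha)$ is a Hom-Lie color algebra) and condition (iii) (the color Hom-Leibniz identity relating $\ast$ and $[-, -]$). Thus no separate verification of $\varepsilon$-antisymmetry, the $\varepsilon$-Jacobi identity, or the Leibniz identity against the seven tridendriform axioms is needed: all of that labor is absorbed into the proofs of Theorem \ref{tp} and Lemma \ref{fond}. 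The genuine content of the theorem therefore resides entirely in Theorem \ref{tp}, whose argument already discharges the Hom-associativity of $\ast$ from the defining identities of Definition \ref{tdd}; the present statement is then pure assembly, and there is no real obstacle beyond the bookkeeping check that the two bracket formulas coincide.
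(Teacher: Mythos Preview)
Your proposal is correct and matches the paper's own proof exactly: the authors simply state that the result follows from Theorem \ref{tp} and Lemma \ref{fond}. Your identification of the bracket with $\ast - \varepsilon(\cdot,\cdot)\ast^{op}$ is the only bookkeeping needed, and you have it right.
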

\begin{proof}
 The proof follows from Theorem \ref{tp} and Lemma \ref{fond}.
\end{proof}
%
\begin{definition}[\cite{IT}]\label{hplad}
 A Hom-post-Lie color algebra $(L, [-, -], \cdot, \varepsilon, \alpha)$ is a Hom-Lie color  algebra  $(L, [-, -], \varepsilon, \alpha)$, i.e.
\begin{eqnarray}
&& [x, y]=-\varepsilon(x, y)[y, x]\qquad\qquad\qquad (\varepsilon\mbox{-skew-symmetry})\label{ss}\\
&& \varepsilon(z, x)[\alpha(x), [y, z]] + \varepsilon(x, y)[\alpha(y), [z, x]]+\varepsilon(y, z)[\alpha(z), [x, y]]=0 \\
&& \qquad\qquad\qquad\qquad\qquad\qquad\qquad (\varepsilon\mbox{-Hom-Jacobi identity}) \label{chli}  \nonumber
\end{eqnarray}
together with an even bilinear map $\cdot : L\otimes L\rightarrow L$ such that
\begin{eqnarray}
&& \alpha(z)\cdot [x, y]-[z\cdot x, \alpha(y)]-\varepsilon(z, x)[\alpha(x), z\cdot y]=0,\qquad\qquad\qquad\qquad \label{pl4}\\
&& \alpha(z)\cdot(y\cdot x)-\varepsilon(z, y)\alpha(y)\cdot(z\cdot x)+\varepsilon(z, y)(y\cdot z)\cdot\alpha(x)
-(z\cdot y)\cdot\alpha(x) \\
&& \qquad\qquad\qquad\qquad\qquad\qquad\qquad\qquad +\varepsilon(z, y)[y, z]\cdot\alpha(x)=0,\label{pl3}  \nonumber
\end{eqnarray}
for any $x, y, z\in\mathcal{H}(L)$.
\end{definition}

\begin{example}
 Let $G=\mathbb{Z}_2\times\mathbb{Z}_2$ be an abelian group and $L$ be a tridimensional $G$-graded linear space defined by
$$L_{(0, 0)}=0, \quad L_{(0, 1)}=<e_2>, \quad L_{(1, 0)}=<e_1>, \quad L_{(1, 1)}=<e_3>.$$
Then $(L, [-, -], \cdot, \varepsilon, \alpha)$ is a Hom-Post-Lie color algebra with
\begin{itemize}
 \item the bicharacter : $\varepsilon((i_1, i_2), (j_1, j_2))=(-1)^{i_1j_1+i_2j_2}$,
\item the bracket : $[e_1, e_2]=-e_3, \quad [e_1, e_3]=-e_2, \quad [e_2, e_3]=-e_1$,
\item the multiplication :  $e_2e_1=-e_3, \quad e_2e_3=e_1$,
\item the even linear map $\alpha : L\rightarrow L$ given by : $\alpha(e_1)=-e_1, \quad \alpha(e_2)=-e_2, \quad \alpha(e_3)=e_3$.
\end{itemize}
\end{example}

 \begin{proposition}
 Let $(L, [-, -], \cdot, \varepsilon, \alpha)$ be a color post-Hom-Lie algebra. With the product
$$x\ast y=x\cdot y+\frac{1}{2}[x, y],$$
 $(L, \ast, \varepsilon, \alpha)$ is a color Hom-Lie admissible algebra, that is the map $\ast : L\times L\rightarrow L$ satisfies the
$\varepsilon$-Hom-Jacobi identity.
\end{proposition}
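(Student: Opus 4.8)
The plan is to show that the $\varepsilon$-commutator attached to $\ast$, namely
$$\langle x,y\rangle := x\ast y-\varepsilon(x,y)\,y\ast x,$$
is an $\varepsilon$-Hom-Lie bracket; by definition this is exactly the assertion that $(L,\ast,\varepsilon,\alpha)$ is color Hom-Lie admissible. First I would simplify $\langle\cdot,\cdot\rangle$. Substituting $x\ast y=x\cdot y+\tfrac12[x,y]$ and using the $\varepsilon$-skew-symmetry \eqref{ss} of $[\cdot,\cdot]$ together with $\varepsilon(x,y)\varepsilon(y,x)=1$, the term $-\varepsilon(x,y)\tfrac12[y,x]$ becomes $+\tfrac12[x,y]$, so the two half-brackets add up to a full bracket and one obtains
$$\langle x,y\rangle=[x,y]+\big(x\cdot y-\varepsilon(x,y)\,y\cdot x\big).$$
In particular $\langle\cdot,\cdot\rangle$ is $\varepsilon$-skew-symmetric, since both summands are (the first by \eqref{ss}, the second by inspection). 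Thus the entire content of the proposition is the $\varepsilon$-Hom-Jacobi identity for $\langle\cdot,\cdot\rangle$.

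Next I would expand the Jacobiator
$$J(x,y,z)=\varepsilon(z,x)\langle\alpha(x),\langle y,z\rangle\rangle+\varepsilon(x,y)\langle\alpha(y),\langle z,x\rangle\rangle+\varepsilon(y,z)\langle\alpha(z),\langle x,y\rangle\rangle$$
by bilinearity, writing $\langle u,v\rangle=[u,v]+\phi(u,v)$ with $\phi(u,v)=u\cdot v-\varepsilon(u,v)v\cdot u$. This produces four families of terms according to which operation, $[\cdot,\cdot]$ or $\phi$, is outermost and which is inner. The pure-bracket family $\sum_{\mathrm{cyc}}\varepsilon(z,x)[\alpha(x),[y,z]]$ vanishes by the $\varepsilon$-Hom-Jacobi identity \eqref{chli} of the underlying Hom-Lie color algebra. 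It remains to kill the three mixed and pure-$\phi$ families, which is where the two Hom-post-Lie axioms of Definition \ref{hplad} enter.

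The key step is to organize the surviving terms by the type of their outermost operation. The dot-with-bracket-inside terms $\alpha(x)\cdot[y,z]$ are rewritten, through the compatibility \eqref{pl4} (relabelled cyclically), as $\varepsilon$-brackets of $\cdot$-products; together with the outer-bracket/inner-$\phi$ family $\sum_{\mathrm{cyc}}\varepsilon(z,x)[\alpha(x),\phi(y,z)]$ they should cancel after reindexing the cyclic sum and applying \eqref{ss}. Symmetrically, the bracket-with-dot-outside terms $[y,z]\cdot\alpha(x)$ are rewritten through \eqref{pl3} as $\cdot$-associators, and these combine with the pure-$\phi$ Jacobiator $\sum_{\mathrm{cyc}}\varepsilon(z,x)\,\phi(\alpha(x),\phi(y,z))$ — itself a cyclic sum of $\cdot$-associators — to vanish by the cyclic symmetrization of \eqref{pl3}. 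Assembling the two cancellations leaves only the pure-bracket family, which is zero, giving $J\equiv 0$.

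I expect the main obstacle to be purely bookkeeping: tracking the commutation factors $\varepsilon$ correctly when \eqref{ss} and the axioms \eqref{pl4}, \eqref{pl3} are applied, and matching the $\varepsilon$-weights produced by these moves against the weights $\varepsilon(z,x),\varepsilon(x,y),\varepsilon(y,z)$ carried by the three cyclic summands. The degree identities $\varepsilon(\alpha(x),[y,z])=\varepsilon(x,y)\varepsilon(x,z)$, valid since $\alpha$, $[\cdot,\cdot]$ and $\cdot$ are all even, are used repeatedly, and a single misplaced factor destroys the cancellation. For this reason I would keep every term indexed by which cyclic copy and which axiom produced it, and verify the cancellation family by family rather than in one pass.
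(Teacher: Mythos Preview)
Your proposal is correct and is exactly the direct computation the paper has in mind; the paper's own proof is the single sentence ``It follows from direct computation by using axioms in Definition~\ref{hplad},'' so your decomposition of the Jacobiator of $\langle\cdot,\cdot\rangle=[\cdot,\cdot]+\phi$ into the pure-bracket, mixed, and pure-$\phi$ families---killed respectively by the $\varepsilon$-Hom-Jacobi identity, \eqref{pl4}, and \eqref{pl3}---is precisely the omitted computation, and your caution about tracking the $\varepsilon$-weights is well placed.
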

\begin{proof}
 It follows from direct computation by using axioms in Definition \ref{hplad}.
\end{proof}

To any Hom-tridendriform color algebra one can associate a Hom-post-Lie color algebra, as stated in the following result.
\begin{theorem}
 Let $(T, \dashv, \vdash, \cdot, \varepsilon, \alpha)$ be a  Hom-tridendriform color algebra. \\ Then $(T, \circ, [-, -], \varepsilon, \alpha)$
 is a Hom-post-Lie color algebra, where
$x\circ y=x\vdash y-y\dashv x$ and $[x, y]=x\cdot y-\varepsilon(x, y)y\cdot x$, for any $x, y\in\mathcal{H}(T)$.
\end{theorem}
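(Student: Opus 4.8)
The plan is to read the pair $(\circ,[-,-])$ as the defining data of a Hom-post-Lie color algebra in the sense of Definition \ref{hplad}: the product $\circ$ will play the role of the post-Lie multiplication there, while $[-,-]$ will be its Lie bracket. Accordingly I would split the verification into the underlying Hom-Lie part and the two mixed compatibility axioms \eqref{pl4} and \eqref{pl3}.

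First I would note that the last relation of Definition \ref{tdd}, $(x\cdot y)\cdot\alpha(z)=\alpha(x)\cdot(y\cdot z)$, is exactly the statement that $(T,\cdot,\varepsilon,\alpha)$ is a Hom-associative color algebra. Lemma \ref{fond} then applies and endows $(T,\cdot,[-,-],\varepsilon,\alpha)$ with a Hom-Poisson color algebra structure whose bracket is precisely $[x,y]=x\cdot y-\varepsilon(x,y)\,y\cdot x$; in particular $(T,[-,-],\varepsilon,\alpha)$ is a Hom-Lie color algebra. This secures the $\varepsilon$-skew-symmetry \eqref{ss} and the $\varepsilon$-Hom-Jacobi identity at once, so the Hom-Lie color algebra demanded by Definition \ref{hplad} is already in hand and only \eqref{pl4} and \eqref{pl3} remain.

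Next I would verify \eqref{pl4}, with $\circ$ in place of the post-Lie product. Substituting $a\circ b=a\vdash b-b\dashv a$ and $[a,b]=a\cdot b-\varepsilon(a,b)\,b\cdot a$ turns each of $\alpha(z)\circ[x,y]$, $[z\circ x,\alpha(y)]$ and $[\alpha(x),z\circ y]$ into a sum of triple products that each carry exactly one of $\vdash,\dashv$ together with one $\cdot$. The three tridendriform relations coupling $\cdot$ with $\vdash$ and $\dashv$, namely $(x\vdash y)\cdot\alpha(z)=\alpha(x)\vdash(y\cdot z)$, $(x\dashv y)\cdot\alpha(z)=\varepsilon(y,x)\alpha(x)\cdot(y\vdash z)$ and $(x\cdot y)\dashv\alpha(z)=\varepsilon(z,x)\alpha(x)\cdot(y\dashv z)$, are precisely the rewriting rules that move each such term into a common normal form so that they cancel in pairs. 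The only subtlety is to keep the scalar coefficients aligned, using $\varepsilon(a,b)\varepsilon(b,a)=1$ and the bi-multiplicativity $\varepsilon(a,b+c)=\varepsilon(a,b)\varepsilon(a,c)$ before cancelling; conceptually this identity expresses that $z\circ(-)$ is an $\alpha$-twisted $\varepsilon$-derivation of the bracket.

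Finally I would establish \eqref{pl3}, which I expect to be the main obstacle. Here each of the doubly-nested terms $\alpha(z)\circ(y\circ x)$, $\alpha(y)\circ(z\circ x)$, $(y\circ z)\circ\alpha(x)$ and $(z\circ y)\circ\alpha(x)$, together with $[y,z]\circ\alpha(x)$, expands into several triple products in $\vdash,\dashv,\cdot$, so the left-hand side becomes a long alternating sum. The strategy is to feed in the remaining tridendriform axioms --- relation \eqref{t1}, together with $(x\vdash y)\dashv\alpha(z)=\varepsilon(z,x)\alpha(x)\vdash(y\dashv z)$ and $\alpha(x)\vdash(y\vdash z)=(\varepsilon(x,y)x\dashv y+x\vdash y+x\cdot y)\vdash\alpha(z)$, the latter two of which also generate exactly the $\cdot$-terms needed to absorb the $[y,z]\circ\alpha(x)$ contribution --- so as to convert every left-nested triple product into the corresponding right-nested one; after this normalisation the whole sum telescopes to zero. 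The genuine difficulty is not any single axiom but the bookkeeping of commutation factors: one must repeatedly apply $\varepsilon(a,b)\varepsilon(b,a)=1$, $\varepsilon(a,b+c)=\varepsilon(a,b)\varepsilon(a,c)$ and $\varepsilon(a+b,c)=\varepsilon(a,c)\varepsilon(b,c)$ so that the $\varepsilon$-weights attached to matching $\vdash$-, $\dashv$- and $\cdot$-terms agree exactly, and it is getting these weights to coincide that makes the computation delicate. Once they match, the terms cancel and \eqref{pl3} follows, completing the proof.
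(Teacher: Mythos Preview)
Your proposal is correct and follows essentially the same approach as the paper: invoke Lemma \ref{fond} (via the Hom-associativity of $\cdot$) to dispatch the Hom-Lie color part, then expand each of \eqref{pl4} and \eqref{pl3} into triple products in $\vdash,\dashv,\cdot$ and cancel using the axioms of Definition \ref{tdd}. In fact your outline is more explicit than the paper's, which simply writes out the expansions and asserts that they vanish by the tridendriform axioms without identifying which axiom kills which pair of terms.
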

\begin{proof}
The $\varepsilon$-skew-symmetry and the $\varepsilon$-Hom-Jacobi identity are trivial, by Lemma \ref{fond}. Now, for any $x, y, z\in\mathcal{H}(T)$,
 we have
 \begin{eqnarray}
&& \hspace{-0.5cm} \alpha(x)\circ(y\circ z)-\varepsilon(x, y)\alpha(y)\circ(x\circ z)+\varepsilon(x, y)(y\circ x)\circ\alpha(z)-(x\circ y)\circ \alpha(z)
+\varepsilon(x, y)[y, x]\circ\alpha(z)=\nonumber\\
&&=\alpha(x)\vdash(y\vdash z)-\alpha(x)\vdash(z\dashv y)-(y\vdash z)\dashv\alpha(x)+(z\dashv y)\dashv\alpha(x)\nonumber\\
&&\quad-\varepsilon(x, y)\alpha(y)\vdash(x\vdash z)+\varepsilon(x, y)\alpha(y)\vdash(z\dashv x)
+\varepsilon(x, y)(x\vdash z)\dashv\alpha(y)-\varepsilon(x, y)(z\dashv x)\dashv\alpha(y)\nonumber\\
&&\quad+\varepsilon(x, y)(y\vdash x)\vdash\alpha(z)-\varepsilon(x, y)(x\dashv y)\vdash\alpha(z)-\varepsilon(x, y)\alpha(z)\dashv(y\vdash x)
+\varepsilon(x, y)\alpha(z)\dashv(x\dashv y)\nonumber\\
&&\quad-(x\vdash y)\vdash\alpha(z)+(y\dashv x)\vdash\alpha(z)+\alpha(z)\dashv(x\vdash y)-\alpha(z)\dashv(y\dashv x)\nonumber\\
&&\quad+\varepsilon(x, y)(y\cdot x)\vdash\alpha(z)-(x\cdot y)\vdash\alpha(z)-\varepsilon(x, y)\alpha(z)\dashv(y\cdot x)
+\alpha(z)\dashv(x\cdot y).\nonumber
 \end{eqnarray}
Which vanishes by axioms in Definition \ref{tdd}.
 Next,
\begin{eqnarray}
 &&\alpha(z)\circ[x, y]-[z\circ x, \alpha(y)]-\varepsilon(z, x)[\alpha(x), z\circ y]=\nonumber\\
&&=\alpha(z)\vdash(x\cdot y)-\varepsilon(x, y)\alpha(z)\vdash(y\cdot x)-(x\cdot y)\dashv\alpha(z)
+\varepsilon(x, y)(y\cdot x)\dashv \alpha(z)\nonumber\\
&&\quad-(z\vdash x)\cdot\alpha(y)+(x\dashv z)\cdot\alpha(y)+\varepsilon(x+z, y)\alpha(y)\cdot(z\vdash x)
-\varepsilon(x+z, y)\alpha(y)\cdot(x\dashv z)\nonumber\\
&&\quad-\varepsilon(z, x)\alpha(x)\cdot(z\vdash y)+\varepsilon(z, x)\alpha(x)\cdot(y\dashv z)+\varepsilon(x, y)(z\vdash y)\cdot\alpha(x)
-\varepsilon(x, y)(y\dashv z)\cdot\alpha(x)\nonumber.
\end{eqnarray}
The left hand side vanishes by axioms in Definition \ref{tdd}.
\end{proof}

\section{Generalization of Yau's twisting composition methos}
\label{sec:Yaustwistinggeneralization}
In this section, we generalize Yau's twisting to a large class of color Hom-algebras and use the centroids to produce other color Hom-algebras from
given one. To state the fundamental result, Theorem \ref{gftp1}, of this section, we give the following definitions.
\begin{definition}
1) By a color Hom-algebra we mean a $(n+3)$-uple $(A, \mu_1, \dots, \mu_n, \varepsilon, \alpha)$ in which $A$ is a $G$-graded linear space,
$\mu_i : A\otimes A \rightarrow A$ $(i=1, \dots, n)$ are even bilinear maps, $\varepsilon : G\times G\rightarrow\mathbb{K}^*$ is a bicharacter
 and $\alpha : A\rightarrow A$ is an  even linear map, called the twisting map.\\
2) If in addition,  $\alpha\circ\mu_i=\mu_i\circ(\alpha\otimes \alpha)$ $(i=1, \dots, n)$, the color Hom-algebra
 $(A, \mu_1, \dots, \mu_n, \varepsilon, \alpha)$ is said to be multiplicative.\\
3) We call a Hom-$X$ color algebra a color Hom-algebra for which the axioms defining the structure of $X$ are linear combination of the terms
of the form either
$\mu_j\circ(\mu_i\otimes \alpha)$ or $\mu_j\circ(\alpha\otimes\mu_i)$.
 \end{definition}
 \begin{example}
1) A Hom-$X$ color algebra $(A, \mu, \varepsilon, \alpha)$ for which
\begin{eqnarray}
 \mu(\mu(x, y), \alpha(z))-\mu(\alpha(x), \mu(y, z))=0,\label{x1}
\end{eqnarray}
 is called a Hom-associative color algebra.\\
 2) A Hom-$X$ color algebra $(A, \mu, \varepsilon, \alpha)$ for which
\begin{eqnarray}
 \mu(\alpha(x), \mu(y, z))+\varepsilon(y, z)\mu(\mu(x, z), \alpha(y)) -\mu(\mu(x, y), \alpha(z))=0,\label{x2}
\end{eqnarray}
 is called a Hom-Leibniz color algebra.\\
3) A Hom-$X$ color algebra $(A, \mu_1, \mu_2, \varepsilon, \alpha)$ for which \eqref{x1} and \eqref{x2} hold for $\mu_1$
and $\mu_2$ respectively and
\begin{eqnarray}
 \mu_2(\alpha(x), \mu_1(y, z))+\varepsilon(y, z)\mu_2(\mu_1(x, z), \alpha(y)) -\mu_1(\mu_2(x, y), \alpha(z)=0
\end{eqnarray}
 is called a Hom-Leibniz-Poisson color algebra.
 \end{example}
\begin{remark}
The following color Hom-algebras enter in these categories of algebras : Hom-Lie color algebras, Hom-pre-Lie (or left Hom-symmetric) color
algebras, Hom-post-Lie color algebras, Hom-left symmetric color dialgebras, Hom-Poisson color dialgebras, Hom-pre-Poisson color algebras,
Hom-post-Poisson color algebras, Hom-Leibniz-Poisson color algebras, Hom-tridendriform color algebras and so on.
\end{remark}

 \begin{definition}
Let $(A, \mu_1, \dots, \mu_n, \varepsilon, \alpha)$ and $(A', \mu'_1, \dots, \mu'_n, \varepsilon, \alpha')$ be two color Hom-algebras.
An even linear map
$f : A\rightarrow A'$ is said to be a morphism of color Hom-algebras if $f\circ\alpha=\alpha'\circ f$ and
\begin{eqnarray}
 f(\mu_i(x, y))=\mu_i'(f(x), f(y)),\nonumber
\end{eqnarray}
for all $x, y\in\mathcal{H}(A)$ and $i=1, \dots, n$.
 \end{definition}

 \begin{definition}
 Let $(A, \mu_1, \dots, \mu_n, \varepsilon, \alpha)$ be a multiplicative color Hom-algebra and $k\in\mathbb{N}^*$.
\begin{enumerate}
 \item [1)] The $kth$ derived color
Hom-algebra of type $1$ of $A$ is defined by
\begin{eqnarray}
 A_1^k=(A,  \mu^{(k)}_1=\alpha^k\circ\mu_1, \dots, \mu^{(k)}_n=\alpha^k\circ\mu_n, \varepsilon, \alpha^{k+1}).
\end{eqnarray}
\item [2)] The $kth$ derived color Hom-algebra of type $2$ of $A$ is defined by
\begin{eqnarray}
 A_2^k=(A,  \mu^{(2^k-1)}_1=\alpha^{2^k-1}\circ\mu_1, \dots, \mu^{(2^k-1)}_n=\alpha^{2^k-1}\circ\mu_n, \varepsilon, \alpha^{2^k}).
\end{eqnarray}
\end{enumerate}
Note that $A_1^0=A_2^0=(A, \mu_1, \dots, \mu_n, \varepsilon, \alpha)$ and
 $A^1_1=A_2^1=(A, \alpha\circ\mu_1, \dots, \alpha\circ\mu_n, \varepsilon, \alpha^{2})$.
 \end{definition}

\begin{definition}
 A color Hom-algebra $(A, \mu_1, \dots, \mu_n, \varepsilon, \alpha)$ endowed with an even linear map $R : A\rightarrow A$ such that
\begin{eqnarray}
 \mu_i(R(x), R(y))=R\Big(\mu_i(R(x), y)+\mu_i(x, R(y))+\lambda\mu_i(x, y)\Big),\;\; i=1, \dots, n, \label{rb}
\end{eqnarray}
with $\lambda\in\mathbb{K}, x, y\in \mathcal{H}(A)$, is called a Rota-Baxter color Hom-algebra, and $R$ is called a Rota-Baxter operator on $A$.
\end{definition}
The below result allows to get Hom-$X$ color algebras from either an $X$-color algebra on the one hand or another Hom-$X$ color algebra on the
other hand.
\begin{theorem}\label{gftp1}
Let $(A, \mu_1, \dots, \mu_n, \varepsilon, \alpha)$ be a Rota-Baxter Hom-$X$ color algebra and $\beta : A\rightarrow A$ be an endomorphism of $A$.
Then, for any nonnegative integer $n$,
$$A_\beta=(A, \mu_\beta^1=\beta^n\circ\mu_1, \dots, \mu_\beta^n=\beta^n\circ\mu_n, \beta^n\circ\alpha)$$
 is a Rota-Baxter Hom-$X$ color algebra, where $\beta^n=\beta\circ\beta^{n-1}$.

Moreover, suppose that $(A', \mu'_1, \dots, \mu'_n, \varepsilon, \alpha')$ is another Hom-$X$ color algebra
and $\beta' : A'\rightarrow A'$ be an algebra endomorphism.
 If $f : A\rightarrow A'$ is a morphism of Hom-$X$ color algebras that satisfies
 $f\circ\beta=\beta'\circ  f$,
then $f : A_{\beta}\rightarrow A'_{\beta'}$ is also a morphism of Hom-$X$ color algebras.
\end{theorem}
\begin{proof}
The proof of the first part follows from the following facts.

For any $x, y, z\in\mathcal{H}(X), \; 1\leq i, j\leq n$,
\begin{eqnarray}
 \mu_\beta^i(\mu_\beta^j(x, y), (\beta^n\circ\alpha)(z))
&=&\mu_\beta^i(\mu_\beta^j(x, y), \beta^n(\alpha(z)))=\beta^n\mu_i\Big(\beta^n\mu_j(x, y), \beta^n(\alpha(z))\Big) \nonumber\\
&=&(\beta^n\circ\beta^n)\Big(\mu_i(\mu_j(x, y), \alpha(z))\Big)=\beta^{2n}\Big(\mu_i(\mu_j(x, y), \alpha(z))\Big), \nonumber
\end{eqnarray}
and
\begin{eqnarray}
  \mu_\beta^i((\beta^n\circ\alpha)(x), \mu_\beta^j(y, z))
&=& \mu_\beta^i(\beta^n(\alpha(x)), \mu_\beta^j(y, z))
= \beta^n\Big(\mu_i(\beta^n(\alpha(x)), \beta^n(\mu_j(y, z))\Big)\nonumber\\
&=&(\beta^n\circ\beta^n)\Big(\mu_i(\alpha(x), \mu_j(y, z))\Big)=\beta^{2n}\Big(\mu_i(\alpha(x), \mu_j(y, z))\Big).\nonumber
\end{eqnarray}
To prove the Rota-Baxter identity \eqref{rb} for $\mu_\beta^i$, we have
\begin{eqnarray}
 \mu_\beta^i(x, y)
&=&\beta^n(\mu_i(x, y))=\beta^n\Big(\mu_i(R(x), y)+\mu_i(x, R(y))+\lambda\mu_i(x, y)\Big)\nonumber\\
&=&\beta^n\mu_i(R(x), y)+\beta^n\mu_i(x, R(y))+\lambda\beta^n\mu_i(x, y)\nonumber\\
&=&\mu_\beta^i(R(x), y)+\mu_\beta^i(x, R(y))+\lambda\mu_\beta^i(x, y)\nonumber.
\end{eqnarray}

For the second assertion, we have
\begin{eqnarray}
f(\mu_\beta^i(x, y))&=&f(\beta^n(\mu_i(x, y)))=f(\mu_i(\beta^n(x), \beta^n(y))=\mu'_i(f(\beta^n(x)), f(\beta^n(y))\nonumber\\
&=&\mu'_i(\beta'^n(f(x)), \beta'^n(f(y))=\beta'^n(\mu'_i(f(x), f(y))=\mu_\beta'^i(f(x), f(y)) \nonumber.
\end{eqnarray}
This ends the proof.
\end{proof}
 \begin{remark}
 Whenever $\alpha=Id$ in the first part of the previous theorem, we obtain a Hom-$X$ color algebra from an $X$-color algebra.
\end{remark}
\begin{example}\label{ftp1}
 Let $(T, \dashv, \vdash, \cdot, \varepsilon, \alpha)$ be a  Hom-tridendriform color algebra and $\beta : T\rightarrow T$ be an  endomorphism of
$T$. Then, $T_\beta=(T, \dashv_\beta=\beta^n\circ\dashv, \vdash_\beta=\beta^n\circ\vdash, \cdot_\beta=\beta^n\circ\cdot, \beta^n\circ\alpha)$
is a Hom-tridendriform color algebra, for any nonnegative integer $n$.

Moreover, suppose that  $(T', \dashv', \vdash', \cdot', \varepsilon, \alpha')$ is another  Hom-tridendriform color algebra
 and $\beta' : T'\rightarrow T'$ a Hom-tridendriform color algebra  endomorphism. If $f : T\rightarrow T'$ is a morphism of  Hom-tridendriform
 color algebra  that satisfies $f\circ\beta=\beta'\circ  f$, then $ f : T_\beta\rightarrow T'_{\beta'}$
is a morphism of  Hom-tridendriform color algebras.
\end{example}
\begin{proof}
 We shall only prove relation \eqref{t1}, the others being  proved analogously. Then, for any $x, y, z\in\mathcal{H}(T)$,
\begin{eqnarray}
 (x\dashv_\beta y)\dashv_\beta(\beta^n\circ\alpha)(z)
&=&\beta^{n}\Big(\beta^n(x\dashv y)\dashv(\beta^n\circ\alpha)(x)\Big)=\beta^{2n}\Big((x\dashv y)\dashv\alpha(x)\Big)\nonumber\\
&=&\beta^{2n}\Big(\alpha(x)\dashv(y\dashv z+\varepsilon(z, y)y\vdash z+\varepsilon(z, y)y\cdot z\Big)\nonumber\\
&=&(\beta^n\circ\alpha)(x)\dashv_\beta\beta^n\Big(y\dashv z+\varepsilon(z, y)y\vdash z+\varepsilon(z, y)y\cdot z\Big)\nonumber\\
&=&(\beta^n\circ\alpha)(x)\dashv_\beta(y\dashv_\beta z+\varepsilon(z, y)y\vdash_\beta z+\varepsilon(z, y)y\cdot_\beta z)\nonumber.
\end{eqnarray}
For the second assertion, we have
\begin{eqnarray}
f(x\dashv_\beta y)&=&f(\beta^n(x)\dashv\beta^n(y)))=f(\beta^n(x))\dashv' f(\beta^n(y)))\nonumber\\
&=&\beta'^n(f(x))\dashv'\beta'^n(f(y))=f(x)\dashv'_{\beta'} f(y)\nonumber.
\end{eqnarray}
 This completes the proof.
\end{proof}
We have the following series of consequence of Theorem \ref{gftp1}.
\begin{corollary}
Let $(A, \mu_1, \dots, \mu_n, \varepsilon)$ be an $X$-color algebra and $\beta : A\rightarrow A$ be an endomorphism of $A$.
Then $A_\beta= (A, \beta\circ\mu_1, \dots, \beta\circ\mu_n, \varepsilon, \beta)$ is a multiplicative  Hom-$X$ color algebra.
\end{corollary}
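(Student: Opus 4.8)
The plan is to read the statement as the $\alpha = Id$, exponent-one specialization of Theorem \ref{gftp1}, and then to add the one item not literally recorded there, namely multiplicativity of $A_\beta$.

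First I would observe that an $X$-color algebra $(A, \mu_1, \dots, \mu_n, \varepsilon)$ is nothing but a Hom-$X$ color algebra $(A, \mu_1, \dots, \mu_n, \varepsilon, Id)$ whose twisting map is the identity: by the very definition of Hom-$X$, its defining axioms are linear combinations of terms $\mu_j \circ (\mu_i \otimes \alpha)$ and $\mu_j \circ (\alpha \otimes \mu_i)$, and setting $\alpha = Id$ turns these into $\mu_j \circ (\mu_i \otimes Id)$ and $\mu_j \circ (Id \otimes \mu_i)$, which are exactly the (untwisted) $X$-identities assumed to hold in $A$. Thus $A$, viewed in this way, is a Hom-$X$ color algebra to which the structural part of Theorem \ref{gftp1} applies; note that the computations establishing the Hom-$X$ identities in the proof of that theorem never invoke the Rota-Baxter operator, so the bare $X$-color algebra hypothesis already suffices for the conclusion we need.

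Next I would apply Theorem \ref{gftp1} with this identity twisting map and with iteration exponent $1$, so that $\beta^1 = \beta$. The theorem then outputs $A_\beta = (A, \beta \circ \mu_1, \dots, \beta \circ \mu_n, \varepsilon, \beta \circ Id)$, and since $\beta \circ Id = \beta$ this is precisely the tuple in the statement; concretely, each Hom-$X$ axiom for $A_\beta$ equals $\beta^{2}$ applied to the corresponding vanishing $X$-axiom of $A$, hence vanishes, so $A_\beta$ is a Hom-$X$ color algebra.

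It remains to check multiplicativity, the only claim beyond what Theorem \ref{gftp1} asserts. Writing $\mu'_i = \beta \circ \mu_i$ and $\alpha' = \beta$, multiplicativity requires $\alpha' \circ \mu'_i = \mu'_i \circ (\alpha' \otimes \alpha')$. Since $\beta$ is an endomorphism of the $X$-color algebra $A$, we have $\mu_i \circ (\beta \otimes \beta) = \beta \circ \mu_i$; substituting this gives $\mu'_i \circ (\beta \otimes \beta) = \beta \circ \mu_i \circ (\beta \otimes \beta) = \beta \circ (\beta \circ \mu_i) = \beta^{2} \circ \mu_i$, which matches the left-hand side $\beta \circ (\beta \circ \mu_i)$. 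The only real difficulty here is bookkeeping: correctly matching the identity-twist and exponent-one specialization so as to reproduce the stated tuple, and remembering that multiplicativity must be verified by hand because Theorem \ref{gftp1} does not include it in its conclusion.
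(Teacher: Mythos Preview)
Your proposal is correct and follows the same approach as the paper, which simply says ``Take $\alpha = Id$ in Theorem \ref{gftp1}.'' You are in fact more careful than the paper: you explicitly fix the iteration exponent to $1$, note that the Rota-Baxter hypothesis in Theorem \ref{gftp1} plays no role in the Hom-$X$ identities, and verify multiplicativity directly, whereas the paper's one-line proof leaves the word ``multiplicative'' unaddressed.
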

\begin{proof}
 Take $\alpha=Id$ in Theorem \ref{gftp1}.
\end{proof}
\begin{corollary}
 Let $(A, \mu_1, \dots, \mu_n, \varepsilon, \alpha)$ be a Hom-$X$ color algebra such that $\alpha$ be invertible.
Then
$ (A, \mu_{\alpha^{-1}}={\alpha^{-1}}\circ\mu_1, \dots, \mu_{\alpha^{-1}}={\alpha^{-1}}\circ\mu_n, \varepsilon)$
is an $X$-color algebra.
\end{corollary}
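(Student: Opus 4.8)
The plan is to read this corollary as the exact inverse of Yau's twisting and to reduce it to Theorem~\ref{gftp1}. Concretely, I would apply the construction of that theorem with the endomorphism $\beta=\alpha^{-1}$ and twisting power one, so that the deformed products are $\nu_i:=\alpha^{-1}\circ\mu_i$ (these are the $\mu_{\alpha^{-1}}$ of the statement) and the new twisting map is $\alpha^{-1}\circ\alpha=Id$. The decisive observation is purely definitional: a Hom-$X$ color algebra whose twisting map equals $Id$ is nothing but an $X$-color algebra, because the defining identities of $X$ are obtained from those of Hom-$X$ by replacing every occurrence of $\alpha$ by $Id$ in the admissible terms $\mu_j\circ(\mu_i\otimes\alpha)$ and $\mu_j\circ(\alpha\otimes\mu_i)$. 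Thus it suffices to show that $(A,\nu_1,\dots,\nu_n,\varepsilon,Id)$ is a Hom-$X$ color algebra.

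Before invoking Theorem~\ref{gftp1} I must check that $\alpha^{-1}$ is a genuine even endomorphism of $A$. Since $\alpha$ is even and invertible, $\alpha^{-1}$ is even, and from multiplicativity $\alpha\circ\mu_i=\mu_i\circ(\alpha\otimes\alpha)$ one obtains, after substituting $\alpha^{-1}x,\alpha^{-1}y$ and applying $\alpha^{-1}$, the identity $\alpha^{-1}\circ\mu_i=\mu_i\circ(\alpha^{-1}\otimes\alpha^{-1})$. Hence $\alpha^{-1}$ is an endomorphism and the hypotheses of Theorem~\ref{gftp1} are met. Evenness of $\alpha^{-1}$ also guarantees that the bicharacter coefficients $\varepsilon(\cdot,\cdot)$ attached to each term of the defining identities are computed on the same degrees, so they are unaffected by the deformation.

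If one prefers a self-contained verification rather than a citation, the core computation is short. For homogeneous $x,y,z\in\mathcal{H}(A)$ and indices $i,j$, multiplicativity of $\alpha^{-1}$ yields
\[
\nu_j(\nu_i(x,y),z)=\alpha^{-2}\,\mu_j(\mu_i(x,y),\alpha(z)),\qquad
\nu_j(x,\nu_i(y,z))=\alpha^{-2}\,\mu_j(\alpha(x),\mu_i(y,z)).
\]
Consequently, the linear combination of such terms that constitutes the $X$-identity for the products $\nu_i$ equals $\alpha^{-2}$ applied to the corresponding linear combination for the products $\mu_i$, namely the Hom-$X$ identity, which vanishes by hypothesis. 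Since $\alpha^{-2}$ is linear and the scalar bicharacter coefficients pass through it, the $X$-identity holds, so $(A,\nu_1,\dots,\nu_n,\varepsilon)$ is an $X$-color algebra.

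The genuinely delicate point, and the only place a hypothesis is really exploited, is the implication that $\alpha$ multiplicative forces $\alpha^{-1}$ to be an endomorphism; without multiplicativity of $\alpha$ the two displayed identities fail and one cannot factor a power of $\alpha^{-1}$ out of the defining combination. Everything else is uniform bookkeeping that works simultaneously for every structure $X$ of the prescribed form (Hom-associative, Hom-Leibniz, Hom-tridendriform, and so on), so no case analysis on $X$ is needed; the argument uses only that each axiom is a linear combination of the two admissible term shapes.
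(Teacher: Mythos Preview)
Your approach is exactly the paper's: its proof is the single sentence ``Take $n=1$ and $\beta=\alpha^{-1}$ in Theorem~\ref{gftp1}.'' You supply more care than the paper by verifying that $\alpha^{-1}$ is indeed an endomorphism (which tacitly assumes multiplicativity of $\alpha$), and your alternative self-contained computation via $\nu_j(\nu_i(x,y),z)=\alpha^{-2}\mu_j(\mu_i(x,y),\alpha(z))$ is correct and not present in the paper, but the underlying idea is the same specialization of the main twisting theorem.
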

\begin{proof}
 Take $n=1$ and  $\beta={\alpha^{-1}}$ in Theorem \ref{gftp1}.
\end{proof}
\begin{corollary}
 Let $(A, \mu_1, \dots, \mu_n, \varepsilon, \alpha)$ be a Hom-$X$ color algebra. Then the $k$th derived color Hom-algebra of type $1$ and
 the $k$th derived color Hom-algebra of type $2$ are  Hom-$X$ color algebras.
\end{corollary}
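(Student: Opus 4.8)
The plan is to obtain both assertions as immediate specializations of Theorem \ref{gftp1}, in exactly the same spirit as the two preceding corollaries, which instantiated that theorem by taking $\beta=\mathrm{Id}$ and $\beta=\alpha^{-1}$ respectively. Here the natural choice is $\beta=\alpha$ together with two different exponents, one for each type of derived algebra.

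First I would record that the $k$th derived color Hom-algebras are defined only for multiplicative color Hom-algebras, so that $\alpha$ satisfies $\alpha\circ\mu_i=\mu_i\circ(\alpha\otimes\alpha)$ for each $i$ and trivially commutes with itself. Consequently $\alpha$ qualifies as an endomorphism $\beta$ of $A$ in the sense required by Theorem \ref{gftp1}. This verification is the only genuine point in the argument, and it is the step I expect to be the main (indeed the only) obstacle; it follows at once from multiplicativity, everything else being a matter of bookkeeping with exponents.

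Next, for the type $1$ statement I would apply Theorem \ref{gftp1} with $\beta=\alpha$ and with the exponent chosen equal to $k$. The theorem then produces the Hom-$X$ color algebra $(A,\alpha^k\circ\mu_1,\dots,\alpha^k\circ\mu_n,\varepsilon,\alpha^k\circ\alpha)$. Since $\alpha^k\circ\alpha=\alpha^{k+1}$ and $\alpha^k\circ\mu_i=\mu^{(k)}_i$, this is precisely $A_1^k$, which is therefore a Hom-$X$ color algebra.

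Finally, for the type $2$ statement I would apply Theorem \ref{gftp1} once more with $\beta=\alpha$, now choosing the exponent equal to $2^k-1$. The theorem yields $(A,\alpha^{2^k-1}\circ\mu_1,\dots,\alpha^{2^k-1}\circ\mu_n,\varepsilon,\alpha^{2^k-1}\circ\alpha)$, and using $\alpha^{2^k-1}\circ\alpha=\alpha^{2^k}$ this is exactly $A_2^k$. Hence both derived color Hom-algebras inherit the Hom-$X$ structure, completing the argument. No fresh computation is needed, since the defining axioms of a Hom-$X$ color algebra are linear combinations of terms of the prescribed form $\mu_j\circ(\mu_i\otimes\alpha)$ or $\mu_j\circ(\alpha\otimes\mu_i)$, and their preservation under post-composition with a power of an endomorphism was already established in the proof of Theorem \ref{gftp1}.
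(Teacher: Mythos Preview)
Your proposal is correct and follows exactly the paper's own argument: the paper's proof simply says to take $\beta=\alpha$ in Theorem \ref{gftp1}, with exponent $k$ for type $1$ and exponent $2^k-1$ for type $2$. Your additional remark that multiplicativity is what makes $\alpha$ an endomorphism (hence an admissible choice of $\beta$) is a helpful clarification but not a departure from the paper's approach.
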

\begin{proof}
 It is sufficient to take $\beta=\alpha$, and  $n={k}$ and $n={2^k-1}$ respectively in Theorem \ref{gftp1}.
\end{proof}


Now we introduce the notion of centroids for Hom-$X$ color algebras. They allow to provide Hom-$X$ color algebras from a given one, and thus they
play here the role of twisting.
 \begin{definition}
Let $(A, \mu_1, \dots, \mu_n, \varepsilon, \alpha)$ be a color Hom-algebra.  An even linear map $\beta : A\rightarrow A$  is said to be
 an element of the centroid if $\beta(\mu_i(x, y))=\mu_i(\beta(x), y)=\mu_i(x, \beta(y))$ for any $x, y\in\mathcal{H}(A)$. \\
The centroid of $A$ is defined by
$$Cent(A)=\Big\{\beta : A\rightarrow A\;\;\mbox{ even linear map} \mid \beta(\mu_i(x, y))=\mu_i(\beta(x), y)=\mu_i(x, \beta(y)),\;\;
 \forall x, y\in\mathcal{H}(A)\Big\}.$$
\end{definition}

\begin{theorem}
 Let $(A, \mu_1, \dots, \mu_n, \varepsilon, R, \alpha)$ be a Rota-Baxter  Hom-$X$ color algebra and $\beta_1, \beta_2 :A\rightarrow A$ be
 a pair of commuting elements of the centroid such that $\beta_i\circ R=R\circ\beta_i, i=1,2$.
\begin{enumerate}
 \item
[1)] Define bilinear maps $\mu^i_{\beta} : A\times A\rightarrow A, (i=1, \dots, n)$, by
$$\mu^i_{\beta}(x, y):=\mu_i((\beta_2\beta_1)(x), y).$$
Then $A_{\beta_1, \beta_2}^1=(A, \mu^1_\beta, \dots, \mu^n_\beta, \varepsilon, R, \beta_2\beta_1\alpha)$ is also a Rota-Baxter Hom-$X$ color algebra.
\item
[2)] Define bilinear maps $\mu^i_{\beta} : A\times A\rightarrow A, (i=1, \dots, n)$, by
$$\mu^i_{\beta}(x, y):=\mu_i(\beta_1(x), \beta_2(y)).$$
Then $A_{\beta_1, \beta_2}^2=(A, \mu^1_\beta, \dots, \mu^n_\beta, \varepsilon, R, \beta_2\beta_1\alpha)$ is also a Rota-Baxter Hom-$X$ color algebra.
\end{enumerate}
\end{theorem}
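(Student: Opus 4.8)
The plan is to notice that, although parts (1) and (2) prescribe the twisted products by different-looking formulas, both coincide with $\gamma\circ\mu_i$, where $\gamma:=\beta_2\beta_1=\beta_1\beta_2$; a single computation then settles both cases. First I would record the elementary behaviour of $\gamma$. Each of $\beta_1,\beta_2$ is an even element of the centroid, hence slides through every $\mu_i$ from either slot; composing them shows that $\gamma$ is an even linear map with
$$\gamma(\mu_i(x,y))=\mu_i(\gamma(x),y)=\mu_i(x,\gamma(y)).$$
Splitting the composition (and using $\beta_1\beta_2=\beta_2\beta_1$) supplies the bridge between the two parts,
$$\mu_i(\beta_1(x),\beta_2(y))=\beta_1\big(\mu_i(x,\beta_2(y))\big)=\beta_1\beta_2\big(\mu_i(x,y)\big)=\gamma(\mu_i(x,y))=\mu_i(\gamma(x),y),$$
so in both constructions $\mu^i_\beta=\gamma\circ\mu_i$ and the twisting map is $\gamma\circ\alpha=\beta_2\beta_1\alpha$; in particular $A^1_{\beta_1,\beta_2}$ and $A^2_{\beta_1,\beta_2}$ are one and the same color Hom-algebra. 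Since $\beta_1,\beta_2$ commute with $R$, so does $\gamma$.

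Next I would check the Hom-$X$ identities. By definition each such identity is a $\mathbb{K}$-linear combination, with coefficients assembled from values of $\varepsilon$, of terms of the shape $\mu_j(\mu_i(x,y),\alpha(z))$ and $\mu_j(\alpha(x),\mu_i(y,z))$. For a type-one term I would slide the two inner copies of $\gamma$ outward by the centroid property to obtain
$$\mu^j_\beta(\mu^i_\beta(x,y),(\gamma\alpha)(z))=\gamma\,\mu_j\big(\gamma\mu_i(x,y),\gamma\alpha(z)\big)=\gamma^3\big(\mu_j(\mu_i(x,y),\alpha(z))\big),$$
and likewise $\mu^j_\beta((\gamma\alpha)(x),\mu^i_\beta(y,z))=\gamma^3\big(\mu_j(\alpha(x),\mu_i(y,z))\big)$. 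Thus every term of the defining identity acquires the same outer factor $\gamma^3$. Because $\gamma$ is even, it fixes the $\varepsilon$-coefficients (the degrees of $x,y,z$ are unchanged), and because it is linear it commutes with the finite $\mathbb{K}$-linear combination; applying $\gamma^3$ to the original identity, which equals $0$, therefore produces exactly the corresponding identity for the twisted products.

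Finally I would verify the Rota-Baxter identity. Invoking the identity \eqref{rb} for $\mu_i$, then $\gamma R=R\gamma$, then distributing $\gamma$:
\begin{align*}
\mu^i_\beta(R(x),R(y))
&=\gamma\,\mu_i(R(x),R(y))\\
&=\gamma R\big(\mu_i(R(x),y)+\mu_i(x,R(y))+\lambda\mu_i(x,y)\big)\\
&=R\big(\mu^i_\beta(R(x),y)+\mu^i_\beta(x,R(y))+\lambda\mu^i_\beta(x,y)\big),
\end{align*}
so $R$ stays a Rota-Baxter operator of weight $\lambda$ for each new product. The only delicate points are the opening bookkeeping that collapses both prescribed formulas to $\gamma\circ\mu_i$ and the observation that each axiom term picks up the uniform factor $\gamma^3$; once these are established, the argument is a transcription of the proof of Theorem \ref{gftp1}, with the endomorphism property of $\beta^n$ replaced throughout by the centroid sliding property of $\gamma$.
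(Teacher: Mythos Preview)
Your proof is correct, and the underlying mechanism---sliding the centroid maps through the products to produce a uniform outer factor $\gamma^3$ on every axiom term, then verifying the Rota--Baxter identity via $\gamma R=R\gamma$---is exactly what the paper does. The difference is organizational: the paper treats parts (1) and (2) as genuinely distinct constructions, carrying out two separate calculations that both end at $\beta_2^3\beta_1^3\,\mu_i(\mu_j(x,y),\alpha(z))$, and only afterwards remarks that $A^1_{\beta_1,\mathrm{Id}}=A^2_{\beta_1,\mathrm{Id}}$ in the degenerate case $\beta_2=\mathrm{Id}$. You instead begin by showing that the centroid property and commutativity of $\beta_1,\beta_2$ force $\mu_i(\beta_1(x),\beta_2(y))=\mu_i((\beta_2\beta_1)(x),y)=\gamma\mu_i(x,y)$ in full generality, so that $A^1_{\beta_1,\beta_2}=A^2_{\beta_1,\beta_2}$ always. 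This collapses the two cases into one and sharpens the paper's concluding remark; the price is only that a reader must accept the preliminary reduction before seeing any of the structural verification.
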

\begin{proof}
\begin{enumerate}
 \item [1)] For any $x, y\in\mathcal{H}(A)$, we have
 \begin{eqnarray}
\mu^i_\beta(\mu^j_{\beta}(x, y), (\beta_2\beta_1\alpha)(z)) &=& \mu^i_\beta(\mu^j_{\beta}(x, y), \beta_2(\beta_1(\alpha(z)))) \nonumber\\
&=&\mu_i\Big(\beta_2\beta_1\Big(\mu_j(\beta_2\beta_1(x), y)\Big), \beta_2\beta_1(\alpha(z))\Big) \nonumber\\
&=&\beta_2\mu_i\Big(\beta_1\Big(\beta_2\mu_j(\beta_1(x), y)\Big), \beta_2\beta_1(\alpha(z))\Big) \nonumber\\
&=&\beta_2^2\mu_i\Big(\beta_1\Big(\beta_2\mu_j(\beta_1(x), y)\Big), \beta_1(\alpha(z))\Big)\nonumber\\
&=&\beta_2^2\beta_1\mu_i\Big(\beta_2\mu_j(\beta_1(x), y)), \beta_1(\alpha(z))\Big) \nonumber\\
&=&\beta_2^3\beta_1\mu_i\Big(\beta_1\mu_j(x, y)), \beta_1(\alpha(z))\Big)\nonumber\\
&=&\beta_2^3\beta_1^2\mu_i\Big(\mu_j(x, y)), \beta_1(\alpha(z))\Big) \nonumber\\
&=&\beta_2^3\beta_1^3\mu_i(\mu_j(x, y), \alpha(z))\nonumber.
 \end{eqnarray}
We have a similar proof for  $\mu^i_\beta((\beta_2\beta_1\alpha)(x), \mu^j_{\beta}(y, z))$.\\
Next,
 \begin{eqnarray}
\mu^i_\beta(R(x), R(y))
&=& \mu_i(\beta_2\beta_1(R(x)), R(y))=\beta_2\beta_1\mu_i(R(x), R(y))\nonumber\\
&=&\beta_2\beta_1R\Big(\mu_i(R(x), y)+\mu_i(x, R(y))+\lambda\mu_i(x, y)\Big)\nonumber\\
&=& R\Big(\beta_2\beta_1\mu_i(R(x), y)+\beta_2\beta_1\mu_i(x, R(y))+\lambda\beta_2\beta_1\mu_i(x, y)\Big)\nonumber\\
&=& R\Big(\mu_i(\beta_2\beta_1R(x), y)+\mu_i(\beta_2\beta_1(x), R(y))+\lambda\mu_i(\beta_2\beta_1(x), y)\Big)\nonumber\\
&=& R\Big(\mu^i_\beta(R(x), y)+\mu^i_\beta(x, R(y))+\lambda\mu^i_\beta(x, y)\Big)\nonumber.
 \end{eqnarray}
\item [2)] For any $x, y\in\mathcal{H}(A)$, we have
 \begin{eqnarray}
  \mu^i_\beta(\mu^j_{\beta}(x, y), (\beta_2\beta_1\alpha)(z))
&=&\mu^i_\beta(\mu^j_{\beta}(x, y), \beta_2(\beta_1(\alpha(z)))) \nonumber\\
&=&\mu_i\Big(\beta_1\Big(\mu_j(\beta_1(x), \beta_2(y))\Big), \beta_2(\beta_2\beta_1(\alpha(z)))\Big)  \nonumber\\
&=&\beta_1\beta_2\mu_i\Big(\beta_1\beta_2\Big(\mu_j(x, y)\Big), \beta_2\beta_1(\alpha(z))\Big) \nonumber\\
&=&\beta_1^2\beta_2^2\mu_i\Big(\mu_j(x, y), \beta_2\beta_1(\alpha(z))\Big),  \nonumber\\
&=&\beta_1^3\beta_2^3\mu_i(\mu_j(x, y), \alpha(z)) , \nonumber
 \end{eqnarray}
Similarly, we can prove that $\mu^i_\alpha((\beta_2\beta_1\alpha)(x), \mu^j_{\alpha}(y, z))=\beta_1^3\beta_2^3\mu_i(\alpha(x), \mu_j(y, z))$.\\
Next,
 \begin{eqnarray}
\mu^i_\beta(R(x), R(y))
&=& \mu_i(\beta_1(R(x)), \beta_2R(y))=\beta_2\beta_1\mu_i(R(x), R(y))\nonumber\\
&=&\beta_2\beta_1R\Big(\mu_i(R(x), y)+\mu_i(x, R(y))+\lambda\mu_i(x, y)\Big)\nonumber\\
&=& R\Big(\beta_2\beta_1\mu_i(R(x), y)+\beta_2\beta_1\mu_i(x, R(y))+\lambda\beta_2\beta_1\mu_i(x, y)\Big)\nonumber\\
&=& R\Big(\mu_i(\beta_1R(x), \beta_2(y))+\mu_i(\beta_1x, \beta_2(R(y)))+\lambda\mu_i(\beta_1(x), \beta_2(y))\Big)\nonumber\\
&=& R\Big(\mu^i_\beta(R(x), y)+\mu^i_\beta(x, R(y))+\lambda\mu^i_\beta(x, y)\Big)\nonumber.
 \end{eqnarray}
\end{enumerate}
This finishes the proof.
\end{proof}
Let us observe that $A_{\beta_1, Id}^1=A_{\beta_1, Id}^2$.
\begin{corollary}\label{pro}
 Let $(A, \mu_1, \dots, \mu_n, \varepsilon, R, \alpha)$ be a Rota-Baxter  Hom-$X$ color algebra and $\beta\in Cent(A)$.
Define bilinear maps $\mu^i_{\beta} : A\times A\rightarrow A, (i=1, \dots, n)$, by
$$\mu^i_{\beta}(x, y):=\mu_i(\beta(x), y).$$
Then $(A, \mu^1_\beta, \dots, \mu^n_\beta, \varepsilon, R, \beta\circ\alpha)$ is also a Rota-Baxter Hom-$X$ color algebra.
\end{corollary}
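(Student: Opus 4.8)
The plan is to derive this statement as a direct specialization of the preceding theorem on centroids, invoking the observation recorded just above it that $A^1_{\beta_1, Id} = A^2_{\beta_1, Id}$. Concretely, I would set $\beta_1 := \beta$ and $\beta_2 := Id$ in that theorem.

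First I would check that this choice meets the theorem's hypotheses. The identity map $Id$ trivially lies in $Cent(A)$, since $Id(\mu_i(x,y)) = \mu_i(x,y) = \mu_i(Id(x), y) = \mu_i(x, Id(y))$ for every $i$ and all $x, y \in \mathcal{H}(A)$; it obviously commutes with $\beta$ and with $R$. Thus the only substantive requirements are that $\beta \in Cent(A)$, which is assumed, and that $\beta$ commute with the Rota-Baxter operator, $\beta \circ R = R \circ \beta$; I would either take this as part of the standing hypotheses on $\beta$ or note it as the one condition that must be carried over from the parent theorem.

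With $\beta_1 = \beta$ and $\beta_2 = Id$, the twisted products of part 1) become $\mu_i((\beta_2\beta_1)(x), y) = \mu_i(\beta(x), y) = \mu^i_\beta(x,y)$, and the twisting map becomes $\beta_2\beta_1\alpha = \beta\circ\alpha$, exactly matching the data in the statement. Part 2) yields the same products, since $\mu_i(\beta_1(x), \beta_2(y)) = \mu_i(\beta(x), y)$, which is precisely the content of the observation $A^1_{\beta, Id} = A^2_{\beta, Id}$. Hence the conclusion of the theorem gives that $(A, \mu^1_\beta, \dots, \mu^n_\beta, \varepsilon, R, \beta\circ\alpha)$ is a Rota-Baxter Hom-$X$ color algebra, which is the assertion.

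Since everything reduces to substituting into an already-proved theorem, there is no genuine computational obstacle here; the only point requiring care is confirming that the commutation hypothesis $\beta \circ R = R \circ \beta$ demanded by the parent theorem is in force for $\beta$, which I would state explicitly at the moment of application.
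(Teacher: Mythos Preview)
Your proposal is correct and matches the paper's approach exactly: the corollary is stated without proof immediately after the centroid theorem and the observation $A^1_{\beta_1, Id} = A^2_{\beta_1, Id}$, so the intended argument is precisely the specialization $\beta_1 = \beta$, $\beta_2 = Id$ that you carry out. Your remark that the commutation condition $\beta \circ R = R \circ \beta$ from the parent theorem is tacitly needed (though not restated in the corollary) is a valid and worthwhile observation.
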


Remark that $(A, \mu^1_{\beta}, \dots, \mu^n_{\beta}, \varepsilon, \beta)$ is also a Hom-$X$ color algebra whenever $\alpha$ is the identity map
in corollary \ref{pro}.

\section*{Acknowlegments} Ibrahima Bakayoko is grateful to the research environment in Mathematics and
Applied Mathematics MAM, the Division of Applied Mathematics
of the School of Education, Culture and Communication
at M{\"a}lardalen University for hospitality and an excellent and inspiring environment for research and research education and cooperation in Mathematics during his visit, in the framework of research and research education capacity and cooperation development programs in Mathematics between Sweden and countries in Africa supported by Swedish International Development Agency (Sida) and International Program in Mathematical Sciences (IPMS).


\begin{thebibliography}{99}

\bibitem{AbAmMakh:CohomHomcoloralg}  Abdaoui, K., Ammar, F., Makhlouf, A., Constructions and cohomology of Hom-Lie color algebras,
Comm. Algebra, $\bf 43$, no. 11, 4581-4612 (2015)

\bibitem{AizawaSaito}
Aizawa, N., Sato, H., $q$-deformation of the Virasoro algebra with central extension, Phys. Lett. B \textbf{256}, 185-190 (1991) (Hiroshima University preprint, preprint HUPD-9012 (1990))

\bibitem{AmEjMakh:homdeformation} Ammar, F., Ejbehi, Z., Makhlouf, A., Cohomology and deformations of Hom-algebras,  J. Lie Theory \textbf{21}, no. 4, 813-836 (2011)

\bibitem{AmMakh:HomLieadmsuperalg} Ammar, F., Makhlouf, A., Hom-Lie and Hom-Lie admissible superalgebras, Journal of Algebra, $\bf 324$, 1513-1528 (2010)

\bibitem{AmMakhSaad:cohomhomliesuperalgqdefwittalg} Ammar, F., Makhlouf, A., Saadaoui, N., Cohomology of Hom-Lie superalgebras and q-deformed Witt superalgebra,
 Czechoslovak Mathematical Journal, $\bf 63$, no. 3, 721-761 (2013)

\bibitem{ams:ternary} Arnlind, J., Makhlouf, A., Silvestrov, S.,
Ternary Hom-Nambu-Lie algebras induced by Hom-Lie algebras, J. Math. Phys. \textbf{51}, 043515, 11 pp. (2010)

\bibitem{ams:naryhomnambulie} Arnlind, J., Makhlouf, A., Silvestrov, S.,
Construction of $n$-Lie algebras and $n$-ary Hom-Nambu-Lie algebras, J. Math. Phys. \textbf{52}, 123502, 13 pp. (2011)

\bibitem{AtMaSi:GenNambuAlg} Ataguema, H.,  Makhlouf, A., Silvestrov, S., Generalization of n-ary Nambu
algebras and beyond. J. Math. Phys. 50, 083501 (2009) 

\bibitem{BI:LaplacehomLiequasibialg} Bakayoko, I., Laplacian of Hom-Lie quasi-bialgebras, International Journal of Algebra, \textbf{8} (15), 713-727 (2014)

\bibitem{BI:LmodcomodhomLiequasibialg} Bakayoko, I., L-modules, L-comodules and Hom-Lie quasi-bialgebras, African Diaspora Journal of Mathematics, Vol 17, 49-64 (2014)

\bibitem{BI1} Bakayoko, I., Modules over color Hom-Poisson  algebras, J. of Generalized Lie  Theory Appl., \textbf{8:1} (2014)

\bibitem{BI} Bakayoko, I., Hom-post-Lie modules, $\mathcal{O}$-operator and some functors, arXiv:1610.02845 [math.RA]

\bibitem{BD} Bakayoko, I., Diallo, O. W., Some generalized Hom-algebra structures, J. of Generalized Lie Theory Appl.,
\textbf{9:226} (2015)

\bibitem{IT} Bakayoko, I., Tour\'e, B. M., Constructing Hom-Poisson Color Algebras, International Journal of Algebra, Vol. 13, no. 1, 1-16 (2019)

\bibitem{Issa:Hom-Akivis algebras} Issa, A. N., Hom-Akivis algebras, Commentationes Mathematicae Universitatis Carolinae, Vol. 52 No. 4, 485-500 (2011)

\bibitem{Hombiliform} Benayadi, S., Makhlouf, A., Hom-Lie algebras with symmetric invariant nondegenerate bilinear forms, J. Geom. Phys.
\textbf{76}, 38?60 (2014)

\bibitem{ChaiElinPop} Chaichian, M., Ellinas, D., Popowicz, Z., Quantum conformal algebra with central extension, Phys. Lett. B \textbf{248}, 95-99 (1990)

\bibitem{ChaiIsLukPopPresn}
Chaichian, M., Isaev, A. P., Lukierski, J., Popowic, Z., Pre\v{s}najder, P., $q$-deformations of Virasoro algebra and conformal dimensions, Phys. Lett. B \textbf{262} (1), 32-38 (1991)

\bibitem{ChaiKuLuk}  Chaichian, M., Kulish, P., Lukierski, J., $q$-deformed Jacobi identity, $q$-oscillators and $q$-deformed infinite-dimensional algebras, Phys. Lett. B \textbf{237}, 401-406 (1990)

\bibitem{ChaiPopPres} Chaichian, M., Popowicz, Z., Pre\v{s}najder, P., $q$-Virasoro algebra and its relation to the $q$-deformed KdV system, Phys. Lett. B \textbf{249}, 63-65 (1990)

\bibitem{CurtrZachos1} Curtright, T. L., Zachos, C. K., Deforming maps for quantum algebras, Phys. Lett. B \textbf{243}, 237-244 (1990)

\bibitem{DamKu} Damaskinsky, E. V., Kulish, P. P., Deformed oscillators and their applications, Zap. Nauch. Semin. LOMI 189, 37-74 (1991) (in Russian)
(Engl. transl. in J. Sov. Math., 62, 2963-2986 (1992))

\bibitem{DaskaloyannisGendefVir} Daskaloyannis, C., Generalized deformed Virasoro algebras, Modern Phys. Lett. A \textbf{7} no. 9, 809-816 (1992)

\bibitem{HounkonnouDasundo:homcentersymalgbialgPropConseq2016}
 Hounkonnou, M. N., Dassoundo, M. L., Center-symmetric Algebras and Bialgebras: Relevant Properties and Consequences. In: Kielanowski P., Ali S., Bieliavsky P., Odzijewicz A., Schlichenmaier M., Voronov T. (eds) Geometric Methods in Physics. Trends in Mathematics. Birkhäuser, Cham, 2016, pp. 281--293

\bibitem{HounkonnouDasundo:homcentersymalgbialg2018} Hounkonnou M. N., Dassoundo M. L., Hom-center-symmetric algebras and bialgebras, arXiv:1801.06539 [math.RA]

\bibitem{DN} Gaparayi, D., Issa, A. N., A twisted generalization of Lie-Yamaguti algebras, Int. J. Algebra 6(7), 339-352 (2012)

\bibitem{ElhamdadiMakhlouf:DeformHomAltHomMalcev} Elhamdadi, M., Makhlouf, A., Deformations of Hom-Alternative and Hom-Malcev algebras,  Algebra, Groups and Geometries,
$\bf 28$, no 2 (2011), 117-145.

\bibitem{HLS} Hartwig, J. T., Larsson, D., Silvestrov, S. D.,
Deformations of Lie algebras using $\sigma$-derivations, J. of Algebra \textbf{295},  314--361 (2006)
(Preprint in Mathematical Sciences 2003:32, LUTFMA-5036-2003, Centre for Mathematical Sciences, Department of Mathematics, Lund Institute of Technology, 52 pp. (2003))

\bibitem{Hu} Hu, N., $q$-Witt algebras, $q$-Lie algebras, $q$-holomorph structure and representations,  Algebra Colloq. \textbf{6}, no. 1, 51-70 (1999)

\bibitem{Kassel92} Kassel, C., Cyclic homology of differential operators, the virasoro algebra and a $q$-analogue, Comm. Math. Phys. 146 (2), 343-356 (1992)

\bibitem{kms:nhominduced} Kitouni, A., Makhlouf, A., Silvestrov, S., On $(n+1)$-Hom-Lie algebras induced by $n$-Hom-Lie algebras Georgian Math. J. \textbf{23} no. 1, 75-95 (2016)

\bibitem{LarssonSigSilvJGLTA2008} Larsson, D., Sigurdsson, G., Silvestrov, S. D., Quasi-Lie deformations on the algebra $\mathbb{F}[t]/(t^N)$,
J. Gen. Lie Theory Appl. \textbf{2}, 201-205 (2008)

\bibitem{LS1} Larsson, D., Silvestrov, S. D., Quasi-Hom-Lie algebras, Central Extensions and $2$-cocycle-like identities, J. Algebra \textbf{288}, 321-344 (2005)
(Preprints in Mathematical Sciences 2004:3, LUTFMA-5038-2004, Centre for Mathematical Sciences, Department of Mathematics, Lund Institute of Technology, Lund University (2004))

\bibitem{LS2} Larsson, D., Silvestrov, S. D., Quasi-Lie algebras. In
"Noncommutative Geometry and Representation Theory in Mathematical Physics". Contemp. Math., 391, Amer. Math. Soc., Providence, RI, 241-248 (2005)
(Preprints in Mathematical Sciences 2004:30, LUTFMA-5049-2004, Centre for Mathematical Sciences, Department of Mathematics, Lund Institute of Technology, Lund University (2004))

\bibitem{LSGradedquasiLiealg} Larsson, D., Silvestrov, S. D., Graded quasi-Lie agebras, Czechoslovak J. Phys. \textbf{55}, 1473-1478 (2005)

\bibitem{LS3}
Larsson, D., Silvestrov, S. D., Quasi-deformations of $sl_2(\mathbb{F})$ using twisted derivations, Comm. in Algebra \textbf{35}, 4303-4318 (2007)

\bibitem{LiuKQuantumCentExt} Liu, K. Q., Quantum central extensions, C. R. Math. Rep. Acad. Sci. Canada \textbf{13} (4), 135-140 (1991)

\bibitem{LiuKQCharQuantWittAlg} Liu, K. Q., Characterizations of the Quantum Witt Algebra, Lett. Math. Phys. \textbf{24} (4), 257-265 (1992)

\bibitem{LiuKQPhDthesis} Liu, K. Q., The Quantum Witt Algebra and Quantization of Some Modules over Witt Algebra, PhD Thesis,
Department of Mathematics, University of Alberta, Edmonton, Canada (1992)

\bibitem{JLodayDialgebras} Loday, J-L., Dialgebras, Pr\'epublication de l'Inst de Recherche Math. Avanc\'ee (Strasbourg), Vol 14, 61pp (1999)

\bibitem{JLoday1DialgebrasPrepr} Loday, J-L., Dialgebras, in Lecture Notes in Math., vol. 1763, Springer, Berlin, 7-66 (2001)

\bibitem{JL2} Loday, J-L., Ronco, M., Trialgebras and families of polytopes, in Homotopy Theory: relations with Algebraic Geometry, Group Cohomology, and Algebraic K-Theory, in Contemp. Math. vol 346, Amer. Math. Soc., Providence, RI, 369-673 (2004)

\bibitem{MaZheng:RotaBaxtMonoidalHomAlg} Ma, T., Zheng, H., Some Results on Rota–Baxter Monoidal Hom-Algebras, Results Math. \textbf{72} (1-2), 145-170 (2017).

\bibitem{MaMakSilv:RotaBaxterCosystCoquasitriangmixedbialg} Ma, T., Makhlouf, A, Silvestrov, S.,
Rota-Baxter Cosystems and Coquasitriangular Mixed Bialgebras, accepted in Journal of Algebra and Its Applications (2019)

\bibitem{MaMakSilv:RotaBaxterbisystemscovbialg} Ma, T., Makhlouf, A, Silvestrov, S.,
Rota-Baxter bisystems and covariant bialgebras, arXiv:1710.05161[math.RA]

\bibitem{MaMakSilv:CurvedOoperatorsyst} Ma, T., Makhlouf, A, Silvestrov, S.,
Curved $\cal O$-operator systems, arXiv:1710.05232 [math.RA]

\bibitem{MakhloufHomdemdoformRotaBaxterHomalg2011}
Makhlouf, A., Hom-dendriform algebras and Rota–Baxter Hom-algebras, in proceedings of international conferences in Nankai series in pure. In: Bai, C., Guo, L., Loday, J.-L. (eds.) Applied Mathematics and Theoretical Physics, World Scientific, Singapore Vol. 9., (2012), pp. 147–171.

\bibitem{ms:homstructure} Makhlouf, A., Silvestrov, S. D., Hom-algebra structures. J. Gen. Lie Theory Appl. Vol \textbf{2} (2), 51--64 (2008)
(Preprints in Mathematical Sciences  2006:10, LUTFMA-5074-2006, Centre for Mathematical Sciences, Department of Mathematics, Lund Institute of Technology, Lund University (2006))

\bibitem{MakSil:HomLieAdmissibleHomCoalgHomHopf} Makhlouf, A., Silvestrov, S., Hom-Lie Admissible Hom-Coalgebras and Hom-Hopf Algebras, Chapter 17,
in: S. Silvestrov, E. Paal, V. Abramov, A. Stolin (Eds.), Generalized Lie theory in Mathematics, Physics and Beyond, Springer-Verlag, Berlin, Heidelberg, 2009, 189-206


\bibitem{MakYau:RotaBaxterHomLieadmis}
Makhlouf, A., Yau, D., Rota-Baxter Hom-Lie admissible algebras, Comm. Alg.,  $\bf 23$, no 3, 1231-1257 (2014)

\bibitem{ML:postLiealg} Munthe, H. K., Lundervold, A., On post-Lie algebras, Lie Butcher series and moving frames, Foundations of computational
Mathematics, $\bf 13$, 583-613 (2013)

\bibitem{RichardSilvestrovJA2008} Richard, L., Silvestrov, S. D., Quasi-Lie structure of $\sigma$-derivations of $\mathbb{C}[t^{\pm1}]$, J. Algebra  319,  no. 3, 1285-1304 (2008)

\bibitem{shenghomrep} Sheng, Y., Representation of Hom-Lie algebras, Algebr. Reprensent. Theory \textbf{15}, no. 6, 1081-1098 (2012)

\bibitem{SigSilvGLTbdSpringer2009} Sigurdsson, G., Silvestrov, S., Lie color and Hom-Lie algebras of Witt type and their central extensions, In "Generalized Lie theory in mathematics, physics and beyond", Springer, Berlin, 247-255 (2009)

\bibitem{Czech:witt} Sigurdsson, G., Silvestrov, S., Graded quasi-Lie algebras of Witt type, Czech. J. Phys. 56: 1287-1291 (2006)

\bibitem{BV:Homologygenpartitposets} Vallette, B., Homology of generalized partition posets, J. Pure Appl. Algebra, 208, 699-725 (2007)

\bibitem{YauHomEnv} Yau, D., Enveloping algebras of Hom-Lie algebras, J. Gen. Lie Theory Appl. \textbf{2}, no. 2, 95--108 (2008)

\bibitem{YauHomHom} Yau, D., Hom-algebras and homology, J. Lie Theory \textbf{19}, No. 2, 409--421 (2009)

\bibitem{YauGenCom} Yau, D., A Hom-associative analogue of Hom-Nambu algebras, arXiv:1610.02845v1

\bibitem{DYa} Yau, D., Non-commutative Hom-Poisson algebras, arXiv:1010.3408[math.RA].

\bibitem{DYu} Yau, D., Hom-bialgebras and comodule Hom-algebras, Int. E. J. Alg. \textbf{8}, 45-64 (2010)

\bibitem{YauHomNambuLie} Yau, D., On $n$-ary Hom-Nambu and Hom-Nambu-Lie algebras,  J. Geom. Phys. \textbf{62}, 506--522 (2012)

\bibitem{DY3} Yau, D., Hom-Malcev, Hom-alternative and Hom-Jordan algebras, Int. Elect. Journ. of Alg., $\bf 11$, 177-217 (2012)

\bibitem{LY} Yuan, L., Hom-Lie color algebras, Comm. Alg., {\bf 40}, no. 2, 575-592 (2012)

\end{thebibliography}
\end{document}